\documentclass[11pt]{article}
\usepackage{a4wide}
\usepackage{amsmath,amssymb,amsfonts,amsthm}
\usepackage[usenames]{color}
\usepackage{graphicx}

\usepackage[colorlinks=true,citecolor=black,linkcolor=black,urlcolor=blue]{hyperref}

\usepackage{float}

\usepackage{enumerate} 
\usepackage{etoolbox}
\makeatletter
\AfterEndEnvironment{thm}{\everypar{\setbox\z@\lastbox\everypar{}}}
\AfterEndEnvironment{theorem}{\everypar{\setbox\z@\lastbox\everypar{}}}
\AfterEndEnvironment{lemma}{\everypar{\setbox\z@\lastbox\everypar{}}}
\AfterEndEnvironment{definition}{\everypar{\setbox\z@\lastbox\everypar{}}}
\AfterEndEnvironment{remark}{\everypar{\setbox\z@\lastbox\everypar{}}}
\AfterEndEnvironment{proposition}{\everypar{\setbox\z@\lastbox\everypar{}}}
\AfterEndEnvironment{corollary}{\everypar{\setbox\z@\lastbox\everypar{}}}
\AfterEndEnvironment{fact}{\everypar{\setbox\z@\lastbox\everypar{}}}
\makeatother

\newenvironment{remark}{\noindent {\bf Remark}.}{\par\smallskip\par}

\newcommand{\Gnh}{{G_{n, 1/2}}}

\newtheorem{firsttheorem}{Proposition}

\newtheorem{theorem}[firsttheorem]{Theorem}
\newtheorem{lemma}[firsttheorem]{Lemma}
\newtheorem{corollary}[firsttheorem]{Corollary}
\newtheorem{conjecture}[firsttheorem]{Conjecture}

\newtheorem{proposition}[firsttheorem]{Proposition}

\newtheorem{definition}[firsttheorem]{Definition}

\newtheorem*{lemma*}{Lemma}


%
%


\usepackage{marginnote}

\usepackage{bbm}
\usepackage[numbers]{natbib}

\newcommand{\kp}{\mathbf{k}}
\newcommand{\bfk}{\mathbf{k}}

\newcommand{\Pb}{{\mathbb{P}}}

\newcommand{\E}{\mathbb{E}}

\newcommand{\boldk}{{\boldsymbol{k}}}

\renewcommand{\le}{\leqslant}
\renewcommand{\ge}{\geqslant}

\renewcommand{\epsilon}{\varepsilon}

\usepackage{xcolor}

\makeatletter
\def\moverlay{\mathpalette\mov@rlay}
\def\mov@rlay#1#2{\leavevmode\vtop{%
		\baselineskip\z@skip \lineskiplimit-\maxdimen
		\ialign{\hfil$\m@th#1##$\hfil\cr#2\crcr}}}
\newcommand{\charfusion}[3][\mathord]{
	#1{\ifx#1\mathop\vphantom{#2}\fi
		\mathpalette\mov@rlay{#2\cr#3}
	}
	\ifx#1\mathop\expandafter\displaylimits\fi}
\makeatother

\newcommand{\bigcupdot}{\charfusion[\mathop]{\bigcup}{\cdot}}

\begin{document}

\title{The difference between the chromatic and the cochromatic number of a random graph}

\author{
Annika Heckel\thanks{Matematiska institutionen, Uppsala universitet, Box 480, 751 06 Uppsala, Sweden. Email: \texttt{annika.heckel@math.uu.se}. Funded by the Swedish Research Council, Starting Grant 2022-02829.}
}

\maketitle
	
\begin{abstract}
The cochromatic number $\zeta(G)$ of a graph $G$ is the minimum number of colours needed for a vertex colouring where every colour class is either an independent set or a clique. Let $\chi(G)$ denote the usual chromatic number. Around 1991 Erd\H{o}s and Gimbel asked: For the random graph $G \sim G_{n, 1/2}$, does $\chi(G)-\zeta(G) \rightarrow \infty$ whp? Erd\H{o}s offered \$100 for a positive and \$1,000 for a negative answer.
 
 We give a positive answer to this question for roughly 95\% of all values $n$. 
\end{abstract}

\section{Introduction}
Given a graph $G$, the cochromatic number $\zeta(G)$  is the minimum number of colours needed for a vertex colouring where every colour class is either an independent set or a clique. If $\chi(G)$ denotes the usual chromatic number, then clearly $\zeta(G) \le \chi(G)$. By Ramsey's theorem, any graph on $n$ vertices contains either a clique or an independent set of order $\log n$, and so (by colouring greedily) the cochromatic number of any $n$-vertex graph is at most of order~$n/ \log n$.

In the following, we consider random graphs and let  $G \sim G_{n, 1/2}$.  Since the maximum size of a clique and of an independent set in $G$ are both with high probability\footnote{As usual, we say that a sequence $(E_n)_{n \ge 0}$ of events holds \emph{with high probability (whp)} if $\Pb(E_n) \rightarrow 1$ as $n \rightarrow \infty$.} asymptotically equal to $2 \log_2 n$ (see \S\ref{section:alpha} below), and as $\chi(G) \sim \frac n {2\log_2n}$ by a classic result of Bollob\'as \cite{bollobas1988chromatic}, it follows that, whp,
\[
(1+o(1)) \frac{n}{2\log_2 n} \le \zeta(G) \le \chi(G) =(1+o(1)) \frac{n}{2\log_2 n}.
\]
So $\zeta(\Gnh) \sim \chi(\Gnh) \sim \frac n{2\log_2 n}$, that is, the ratio $\chi(G_{n, 1/2})/\zeta(G_{n,1/2})$ tends to $1$ whp. 

 Erd\H{o}s and Gimbel \cite{erdos1993some} asked the following question: does the \emph{difference} $\chi(G)-\zeta(G)$ tend to infinity as $n \rightarrow \infty$? In other words, is there a function $f(n) \rightarrow \infty$ such that, with high probability,
\begin{equation}\label{eq:question}
\chi(G) - \zeta(G) > f(n)?
\end{equation}
At a conference on random graphs in Pozna\'n\footnote{most likely in 1991, based on Erd\H{o}s's and Gimbel's participation records}, Erd\H{o}s offered \$100 for the solution if the answer was `yes', and \$1000 if the answer was `no' (although later said to Gimbel that perhaps \$1000 was too much) --- see Gimbel's recollections in \cite{gimbel2016some}. The question is listed as Problem \#625 on Thomas Bloom's Erd\H{o}s Problems website \cite{erdosproblem625}.

Recently,  Raphael Steiner~\cite{steiner2024cochromatic} and, independently, the author of the present paper \cite{heckel2024question} noted the following connection between $\chi(G)-\zeta(G)$ and the concentration interval length of the chromatic number $\chi(G_{n, 1/2})$:  if 
\begin{equation}\chi(G) - \zeta(G) \le g(n) \text{  whp for some function $g(n)$,} \label{eq:gn}
	\end{equation}
then there is a sequence of intervals of length $g(n)$ which contains $\chi(G)$ whp. We can therefore use the lower bounds on the concentration interval length of $\chi(G_{n, 1/2})$ from \cite{heckel2019nonconcentration,HRHowdoes} to get corresponding statements about $g(n)$: For example, given a function $g(n)$ as in \eqref{eq:gn}, there is an infinite sequence of integers $n^*$ such that $g(n^*) \ge c \sqrt{n^*} \log \log n^* / \log^3 n^*$ for some constant $c>0$. This observation suggests that the answer to Erd\H{o}s and Gimbel's question may be `yes', at least for these values $n^*$ --- but it does not imply it for any $n$.

In this paper, we show that the answer to \eqref{eq:question}  is `yes' for roughly 95\% of all values of $n$. To state the result, we need some notation. Set 
\begin{equation}
	\alpha_0 = \alpha_0(n) = 2 \log_2 n - 2 \log_2 \log_2 n +2 \log_2 (e/2)+1, ~~~~ \alpha= \left \lfloor \alpha_0 \right \rfloor, \label{eq:defalpha0}
\end{equation}
and let
\[
\mu_\alpha = {n \choose \alpha} (\tfrac 12) ^{\alpha \choose 2}
\] 
be the expected number of independent sets (or  by symmetry, the expected number of cliques) of size $\alpha$ in $G_{n, 1/2}$. Then our result is as follows.
\begin{theorem}\label{theorem:difference}
Fix $\epsilon>0$, and let $n$ be such that $n^{0.05+\epsilon}\le \mu_\alpha \le n^{1-\epsilon}$. Let $G \sim \Gnh$, then whp, 
\[
\chi(G) - \zeta(G) \ge n^{1-\epsilon}.
\]	
\end{theorem}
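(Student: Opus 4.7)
The plan rests on a symmetry identity between the first moments of chromatic and cochromatic partition counts. For each $k$, let $Z_k^\chi$ and $Z_k^\zeta$ denote the number of ordered $k$-partitions of $V(G)$ which are respectively chromatic (all classes independent) and cochromatic (each class either independent or a clique). For a fixed partition $P$ with class sizes $s_1, \ldots, s_k$, all $\ge 2$, the events ``class $i$ is independent'' and ``class $i$ is a clique'' are disjoint, each has probability $2^{-\binom{s_i}{2}}$ in $\Gnh$, and distinct classes involve disjoint edge sets. Hence
\[
\Pb[P \text{ is cochromatic}] \;=\; \prod_{i=1}^k \bigl(2\cdot 2^{-\binom{s_i}{2}}\bigr) \;=\; 2^k \cdot \Pb[P \text{ is chromatic}],
\]
so that $\E[Z_k^\zeta] \approx 2^k\,\E[Z_k^\chi]$. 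This multiplicative amplification is the source of the additive gap between $\chi(G)$ and $\zeta(G)$.

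For the lower bound on $\chi(G)$, let $N_\chi = N_\chi(n)$ be the threshold at which $\E[Z_k^\chi]$ crosses $1$. A first-moment calculation in the style of Bollob\'as~\cite{bollobas1988chromatic}, restricted to the balanced size profiles that dominate the sum, shows that $\E[Z_k^\chi]$ decays by a factor of order $\exp(-\Theta(\alpha^2))$ per unit decrease of $k$ below $N_\chi$, and so $\chi(G)\ge N_\chi$ whp. The hypothesis $\mu_\alpha \le n^{1-\epsilon}$ is used here to ensure that balanced profiles with class sizes close to $\alpha$ genuinely dominate and that $N_\chi$ is sharply defined.

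For the upper bound on $\zeta(G)$, the $2^k$ amplification shifts the analogous threshold $N_\zeta$ (where $\E[Z_k^\zeta]\approx 1$) strictly below $N_\chi$. A Taylor expansion of $\log\E[Z_k^\chi]$ around $k = N_\chi$, using the balanced-partition approximation, gives $\frac{\dd}{\dd k}\log\E[Z_k^\chi]\big|_{k=N_\chi} = \Theta(\alpha^2)$, and therefore
\[
N_\chi - N_\zeta \;\approx\; \frac{N_\chi \ln 2}{\Theta(\alpha^2)} \;=\; \Theta\!\left(\frac{n}{\alpha^3}\right) \;=\; \Theta\!\left(\frac{n}{\log^3 n}\right) \;\ge\; n^{1-\epsilon}.
\]
A matching second-moment argument for $Z_{N_\zeta}^\zeta$, in the spirit of the concentration analysis of Heckel~\cite{heckel2019nonconcentration}, then yields $\zeta(G) \le N_\zeta$ whp, which combined with the lower bound above completes the proof. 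The hardest step is this cochromatic second moment: pairs of overlapping cochromatic partitions now mix in several ways (independent--independent, independent--clique, and clique--clique contributions), and the lower bound $\mu_\alpha \ge n^{0.05+\epsilon}$ is plausibly calibrated so that no single overlap type dominates the variance and kills the argument.
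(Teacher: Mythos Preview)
Your high-level heuristic is the one the paper uses: the identity $\E_{1/2}[X_\bfk^{\mathrm{co}}]=2^k\,\E_{1/2}[X_\bfk]$ (for profiles with no singletons) is the engine, and the derivative $\frac{\partial}{\partial k}L_0(n,k,\alpha-1)=\Theta(\log^2 n)$ is what converts the factor $2^k$ into a potential additive gap of order $n/\log^3 n$. However, your plan to run the second moment at $k=N_\zeta$ is precisely the step that does \emph{not} go through, and the paper says so explicitly in its Discussion. The only second-moment machinery available (from \cite{heckel2023colouring}) is built for \emph{tame} profiles, one of whose defining conditions is $\ln\E_m[\bar X_\bfk]\gg -n^{1-c}$ for some constant $c>0$; at $k=\boldk_{\alpha-1}-\Theta(n/\log^3 n)$ one has $\ln\E[\bar X_\bfk]=-\Theta(n/\log n)$, which violates this, and the case analysis behind Lemmas~\ref{lemmascrambled}--\ref{lemmasimilar} collapses. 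The paper therefore does something more modest: it takes $k^*=\boldk_{\alpha-1}-n^{1-\epsilon/2}$, only just far enough below the chromatic threshold that $\bfk^*$ is still tame, and at that point $\E_{1/2}[\bar X^{\mathrm{co}}_{\bfk^*}]=2^{k^*}\E_{1/2}[\bar X_{\bfk^*}]=\exp(\Theta(n/\log n))$ is still enormous. Even then the second-moment ratio obtained is only $\E[Z^2]/\E[Z]^2<\exp(n^{0.99})$, not $1+o(1)$; Paley--Zygmund gives merely $\Pb(\zeta\le k^*)\ge\exp(-n^{0.99})$, and the proof uses the vertex-exposure martingale plus Azuma--Hoeffding (Frieze's trick) to upgrade this to a whp statement at the cost of an additive $O(n^{0.999})$ colours. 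A further technical point your sketch misses is that transferring the second-moment bounds from colourings to cocolourings needs the sharper inequality $\Pb_{1/2}(A_\pi^{\mathrm{co}}\cap A_{\pi'}^{\mathrm{co}})\le 2^{2k-\ell}\,\Pb_{1/2}(A_\pi\cap A_{\pi'})$, where $\ell$ is the number of common parts; the naive factor $2^{2k}$ would not suffice in the ``similar'' range.

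Two smaller points. The lower bound on $\chi(G)$ is not a pure first-moment consequence in the paper; it imports the nontrivial result $\chi_{\alpha-1}(G)\ge\boldk_{\alpha-1}-1$ whp from \cite{heckel2023colouring}, and the hypothesis $\mu_\alpha\le n^{1-\epsilon}$ is used only to bound the number $X_\alpha$ of maximum independent sets, yielding $\chi(G)\ge\chi_{\alpha-1}(G)-X_\alpha\ge\boldk_{\alpha-1}-n^{1-0.9\epsilon}$. And the condition $\mu_\alpha\ge n^{0.05+\epsilon}$ is not about balancing overlap types in the variance; it enters because the lemma furnishing a suitable tame profile (Lemma~\ref{lemma:kstartame}\,c)) requires $\mu_{\alpha-1}\ge n^{1.05}$, which via $\mu_{\alpha-1}=\Theta(n\mu_\alpha/\log n)$ amounts to $\mu_\alpha\gtrsim n^{0.05}$.
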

The condition of this theorem holds for roughly 95\% of all values $n$, see \S\ref{section:alpha} below. The proof relies on transferring recent results by the author and Konstantinos Panagiotou \cite{heckel2023colouring} 
on the usual chromatic number to the cochromatic setting. We discuss the remaining $5\%$ of values $n$ in \S\ref{section:discussion} --- we conjecture that Theorem~\ref{theorem:difference} is true for all $n$, and that the lower bound $n^{1-\epsilon}$ can be replaced by $\Theta(n / \log^3 n)$.

The paper is organised as follows. After setting up some notation and background in \S\ref{section:setup}, we explain the proof structure in \S\ref{section:proof}, and prove Theorem~\ref{theorem:difference} conditional on the key Proposition~\ref{proposition:main}. The main part of the paper is the proof of Proposition~\ref{proposition:main} in \S\ref{section:propproof}, where we show how to transfer previous work on the chromatic number to the cochromatic setting. We conclude with a discussion in \S\ref{section:discussion}.

\section{Setup and notation}\label{section:setup}
Throughout this paper, we set $m= \left \lfloor \frac 12 N\right \rfloor$, and we let $\Pb_{1/2}$ and $\Pb_m$ refer to probabilities and $\E_{1/2}$ and $\E_m$ to expectations in $G_{n, 1/2}$ and $G_{n,m}$, respectively.

\subsection{Background on the independence number of $G_{n, 1/2}$}\label{section:alpha}

We briefly review some background on maximal independent sets (and by symmetry, maximal cliques) in $G_{n, 1/2}$. 
As usual, let $\alpha(G)$ denote the independence number of a graph $G$. Bollob\'as and Erd\H{o}s \cite{erdoscliques}, and independently Matula \cite{matula1970complete,matula1972employee}, proved that, whp, $\alpha(G_{n,1/2}) = \lfloor \alpha_0+o(1) \rfloor$, where $\alpha_0$ was defined in \eqref{eq:defalpha0}. 

For $t \ge 1$, let
\[
\mu_t = {n \choose t} 2^{-{t \choose 2}}
\]
denote the expected number of independent sets of size $t$ in $G_{n, 1/2}$. If we interpret $\mu_t$ appropriately for real $t$, then $\alpha_0$ is, to a good approximation, the value of $t$ so that $\mu_t \approx 1$. Furthermore, it is not hard to show that for $t=\alpha_0+O(1)$,
\[
\frac{\log \mu_t}{\log n} = \alpha_0-t+o(1), 
\]
and that, recalling $\alpha=\lfloor \alpha_0\rfloor$,
\begin{equation}\label{eq:mualpha}
1 \le \mu_\alpha \le n^{1+o(1)}, ~~~~ \alpha_0 - \alpha = \frac{\log \mu_\alpha}{\log n}+o(1), ~~~~\text{and }~~~~ \mu_{\alpha-1} = \Theta(n \mu_\alpha /\log n).
\end{equation}
So if $n^{0.05+\epsilon}\le \mu_\alpha \le n^{1-\epsilon}$ as in Theorem~\ref{theorem:difference}, then whp $ \alpha(G_{n, 1/2})=\alpha$ and furthermore $\mu_{\alpha-1} \ge n^{1.05}$ if $n$ is large enough.

It follows from \eqref{eq:defalpha0} and \eqref{eq:mualpha} that as we increase $n$ and thereby $\alpha_0(n)$, $\mu_\alpha$ increases from $n^{o(1)}$ to $n^{1+o(1)}$ (the exponent is roughly linear in $\log n$), and then drops back to $n^{o(1)}$. The drop happens whenever $\alpha_0(n)$ is near an integer and $\alpha(G_{n, 1/2}) = \lfloor \alpha_0+o(1) \rfloor$ `jumps up' to this next integer.

 We claimed earlier that the condition of Theorem~\ref{theorem:difference} is true for roughly 95\% of all $n$. Indeed, as we increase $n$, the fraction of $n' \le n$ that  Theorem~\ref{theorem:difference} applies to (picking some arbitrarily small $\epsilon>0$) fluctuates between
 \[
 \frac{2^{-0.05/2}-2^{-0.5}}{1-2^{-0.5}} \approx 0.9413  \quad \text{and} \quad \frac{1-2^{-0.95/2}}{1-2^{-0.5}} \approx 0.9578.
 \]

\subsection{(Co-)colouring profiles and the $t$-bounded chromatic number}
We view colourings and cocolourings as ordered partitions 
\[
\Pi=(V_1, \dots, V_k)
\]
of the vertex set $[n]$. In the case of colourings, each $V_i$ is an independent set, whereas for a cocolouring each $V_i$ is either an independent set or a clique. We will also consider partial ordered partitions
\[
\Pi=(V_1, \dots, V_\ell)
\]
where all $V_i$ are independent sets (or either an independent set or a clique), and $\bigcup_i V_i \subset V$.

We define \emph{profiles} to describe the sizes of the sets $V_i$. Given $k \ge 1$, a \emph{$k$-profile} is a sequence $\bfk=(k_u)_{1 \le  u \le n}$ such that 
\[
\sum_{1\le u \le n} k_u = k \quad \text{and} \quad \sum_{1\le u \le n} uk_u \le n.
\]
If $\sum_{1\le u \le n} uk_u = n$ we call $\bfk$ a complete profile, otherwise a partial profile.

We say that a (partial or complete) partition $\Pi=(V_1, \dots, V_k)$ has profile $\bfk$ if (1) there are exactly $k_u$ sets $V_i$ so that $|V_i|=u$, and (2) the sets $V_i$ are decreasing in size. A profile $\bfk$ is called \emph{$t$-bounded} if 
\[k_u=0 \quad \text{for} \quad u>t.\]
We will consider $(\alpha-1)$-bounded profiles for most of the paper.

The \emph{$t$-bounded chromatic number} $\chi_t(G)$ is the least number of colours $k$ so that there is a colouring with $k$ colours where every colour class contains at most $t$ vertices (that is, a colouring with a $t$-bounded $k$-profile).

\subsection{Counts of colourings and co-colourings}

Give a  profile $\bfk$, let $X_{\bfk}$ count the number of colourings and $X_{\bfk}^\mathrm{co}$ the number of cocolourings with profile $\bfk$ in either $G_{n, 1/2}$ or $G_{n,m}$, and let
\begin{equation}\label{eq:unordered}
\bar X_\bfk = \frac{X_\bfk}{\prod_{u=1}^t k_u!}, \quad \quad \quad \bar X^{\mathrm{co}}_\bfk = \frac{X_\bfk^\mathrm{co}}{\prod_{u=1}^t k_u!} 
\end{equation}
count the number of unordered (co-)colourings with profile $\bfk$. The expected number of colourings with profile $\bfk$ in $G_{n, 1/2}$ and in $G_{n,m}$ is given by 
\begin{equation}
\mathbb{E}_{1/2}[X_\bfk] = P_{\bfk }2^{-f_\bfk}
\quad
\text{and}
\quad
\mathbb{E}_m[X_\bfk] = P_\bfk \, \frac{{N-f_\bfk \choose m}}{{N \choose m}} \label{eq:expectation}
\end{equation}
respectively, where
\begin{equation*}
P_\bfk= \frac{n!}{(n- \sum_{1\le u \le n}k_u u)!\prod_{1 \le u \le n} u!^{k_u}}   \quad \text{and} \quad	f_\mathbf{k}= \sum_{1 \le u \le n} {u \choose 2} k_u.
\end{equation*}
Of course the unordered expressions are then given by 
\[
\mathbb{E}_{1/2}[\bar X_\bfk] =  \frac{\mathbb{E}_{1/2}[ X_\bfk]}{\prod_{1 \le u \le n}k_u!} 
\quad
\text{and}
\quad
\mathbb{E}_{m}[\bar X_\bfk] =  \frac{\mathbb{E}_{m}[ X_\bfk]}{\prod_{1 \le u \le n}k_u!} .
\]

Let $E_{n,k,t}$ denote the expected total number of unordered $t$-bounded $k$-colourings of $G_{n, 1/2}$ (from all $t$-bounded $k$-profiles). 
Then the \emph{$t$-bounded first moment threshold} is defined as
\begin{equation}\label{ktdef}
	\boldk_t(n) := \min\{k: E_{n,k,t} \ge 1 \}.
\end{equation}

\subsection{Technical lemma}

The following lemma is useful to transfer probabilities between $G_{n, 1/2}$ and $G_{n,m}$.
\begin{lemma}[\cite{heckel2023colouring}, Le.~3.7] \label{gnmlemma}
	Let $p \in (0,1)$ be constant and $m = Np + O(1)$. If $x=x(n)=o(n^{4/3})$, then 
	\[
	\frac{{N-x \choose m}}{{N \choose m}} \sim q^{x} \exp \left(- \frac{(b-1)x^2}{n^2}\right)
	\]
	where $q=1-p$ and $b=1/q$.
\end{lemma}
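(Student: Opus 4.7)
The plan is to expand the ratio as a telescoping product and track the main contribution and all error terms carefully. First I would write
\[
\frac{\binom{N-x}{m}}{\binom{N}{m}} = \prod_{i=0}^{x-1} \frac{N-m-i}{N-i}.
\]
Using $m = Np + O(1)$, so $N - m = Nq + O(1)$, a short algebraic manipulation (factoring $q$ out and using the identity $p/q = b-1$) recasts each factor as
\[
\frac{N-m-i}{N-i} = q\left(1 - \frac{(b-1)\, i}{N-i} + O\!\left(\tfrac{1}{N}\right)\right).
\]

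Next I would take logarithms of both sides and apply $\log(1+y) = y - y^2/2 + O(y^3)$ for small $y$; this is justified because the parenthetical factor is $1 + o(1)$ uniformly for $i \le x-1$, since $i/N = o(n^{4/3}/n^2) = o(1)$. Summing the linear contribution over $i$ gives
\[
-\sum_{i=0}^{x-1} \frac{(b-1)\, i}{N-i} = -\frac{(b-1)\, x(x-1)/2}{N} + O\!\left(\tfrac{x^3}{N^2}\right),
\]
where the error bounds the discrepancy between $1/(N-i)$ and $1/N$. Since $N = n(n-1)/2$, the main term equals $-(b-1)x^2/n^2 + o(1)$, and the error is $O(x^3/n^4) = o(1)$ under the hypothesis $x = o(n^{4/3})$.

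Finally I would verify that the remaining errors all vanish: the quadratic term in the Taylor expansion contributes $O\!\bigl(\sum_i i^2/N^2\bigr) = O(x^3/N^2) = o(1)$; cubic and higher Taylor terms are of even smaller order; and the aggregated $O(1/N)$ additive errors sum to $O(x/N) = o(1)$. Exponentiating produces
\[
\frac{\binom{N-x}{m}}{\binom{N}{m}} \sim q^{x} \exp\!\left(-\frac{(b-1)\, x^2}{n^2}\right),
\]
as claimed. The argument is essentially routine asymptotic bookkeeping; the only genuine subtlety is that the hypothesis $x = o(n^{4/3})$ is precisely what makes $x^3/N^2 = o(1)$, and this cubic-in-$x$ error appears both in the leading-order linear sum (as the $1/N$ versus $1/(N-i)$ discrepancy) and as the quadratic-Taylor coefficient, so it is the binding constraint on the allowable range of $x$.
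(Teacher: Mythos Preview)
Your proof is correct and complete; the telescoping product, the extraction of the factor $q$, the Taylor expansion of the logarithm, and the error bookkeeping are all handled cleanly, and you have correctly identified $x^3/N^2=o(1)$ as the binding constraint that dictates the hypothesis $x=o(n^{4/3})$. The paper itself does not prove this lemma at all---it is quoted as Lemma~3.7 of \cite{heckel2023colouring} and used as a black box---so there is no in-paper proof to compare against, but your argument is the standard one.
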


\section{Proof of Theorem~\ref{theorem:difference}}\label{section:proof}

\subsection{Proof strategy}

The proof relies on finding two values $k_1=k_1(n)$ and $k_2=k_2(n)$ so that, if $n^{0.05+\epsilon} \le \mu_\alpha \le n^{1-\epsilon}$,
\begin{enumerate}
	\item[(A)] $k_1 - k_2 \ge n^{1-\epsilon}$ if $n$ is large enough, 
	\item[(B)] whp, $\chi(G_{n, 1/2}) \ge k_1$, and	
	\item[(C)] whp, $\zeta(\Gnh) \le k_2$.
\end{enumerate}

\subsubsection{The lower bound on $\chi(G_{n, 1/2})$}

%
%

Our starting point is the following result from \cite{heckel2023colouring}.

\begin{lemma}[\cite{heckel2023colouring}, Lemma~8.1]\label{lemma:lowerbound}
	Let $G \sim G_{n, 1/2}$, then, whp,
	\[
	\chi_{\alpha-1} (G)\ge \boldsymbol{k}_{\alpha-1}-1.
	\]
\end{lemma}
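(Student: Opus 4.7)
The plan is to apply Markov's inequality and then quantify how fast the first moment $E_{n, k, \alpha-1}$ decays as $k$ drops below $\boldk_{\alpha-1}$. Set $k^\star := \boldk_{\alpha-1} - 2$. Any witness of $\chi_{\alpha-1}(G) \le k^\star$ is in particular an $(\alpha-1)$-bounded $k^\star$-colouring of $G$, so Markov's inequality gives
\[
\Pb_{1/2}\bigl[\chi_{\alpha-1}(G) \le k^\star\bigr] \;\le\; E_{n, k^\star, \alpha-1}.
\]
By the defining property \eqref{ktdef} of the threshold we already know $E_{n, k^\star+1, \alpha-1} < 1$, so the lemma reduces to showing that dropping $k$ by one more, from $\boldk_{\alpha-1}-1$ to $\boldk_{\alpha-1}-2$, shrinks the first moment by an $\omega(1)$ factor.

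To control this ratio I would decompose $E_{n, k, \alpha-1} = \sum_\bfk \E_{1/2}[\bar X_\bfk]$ over complete $(\alpha-1)$-bounded $k$-profiles using the closed form \eqref{eq:expectation}, and identify the dominant (``extremal'') profiles in both $E_{n, k^\star, \alpha-1}$ and $E_{n, k^\star+1, \alpha-1}$ as those in which almost all $k$ classes have the maximal allowed size $\alpha-1$, together with at most a handful of smaller classes absorbing the remainder $n - (\text{\#full classes})\cdot(\alpha-1)$. A direct manipulation of $P_\bfk \, 2^{-f_\bfk}/\prod_u k_u!$ shows that passing from such a $k^\star$-profile to the analogous $(k^\star+1)$-profile, which informally splits off one additional size-$(\alpha-1)$ block, multiplies the summand by a factor of order $\mu_{\alpha-1}/k^\star$. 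Under the hypothesis $\mu_\alpha \ge n^{0.05+\varepsilon}$, equation \eqref{eq:mualpha} gives $\mu_{\alpha-1} = \Theta(n\mu_\alpha/\log n) \ge n^{1.05+\varepsilon - o(1)}$, while $k^\star = O(n/\log n)$, so the ratio is at least $n^{0.05 + \varepsilon - o(1)}$, yielding $E_{n, k^\star, \alpha-1} \le n^{-0.05 - \varepsilon + o(1)} = o(1)$.

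The main obstacle is that this clean multiplier $\mu_{\alpha-1}/k^\star$ is only exact on the extremal profiles, and one must rule out the possibility that the sum is instead dominated by profiles with many classes of size strictly less than $\alpha-1$, for which the local ``add-a-colour'' ratio could a priori be far less favourable. I would resolve this by classifying $(\alpha-1)$-bounded $k$-profiles by the number (or total size) of sub-maximal classes and showing, via elementary manipulations of the summand $P_\bfk \, 2^{-f_\bfk}/\prod_u k_u!$, that each additional sub-maximal class costs a multiplicative factor of order $n^{-1+o(1)}$ in the expectation: this is essentially because replacing a size-$(\alpha-1)$ block by two smaller ones trades a factor of $\mu_{\alpha-1}$ for a much cheaper product of $\mu_{u}$'s with $u \ll \alpha-1$. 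The contribution of non-extremal profiles then forms a rapidly converging geometric series dominated by the extremal term, uniformly for $k \in \{k^\star, k^\star+1\}$. Carrying out this profile bookkeeping is the technical core of the argument.
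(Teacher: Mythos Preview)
The paper does not give its own proof of this lemma; it is quoted verbatim from \cite{heckel2023colouring}, Lemma~8.1. Your high-level strategy---bound $\Pb_{1/2}[\chi_{\alpha-1}(G)\le \boldk_{\alpha-1}-2]$ by $E_{n,\boldk_{\alpha-1}-2,\alpha-1}$ via Markov, use $E_{n,\boldk_{\alpha-1}-1,\alpha-1}<1$ from the definition of the threshold, and then show the first moment drops by an $\omega(1)$ factor when $k$ decreases by one---is the standard one and is almost certainly the structure of the cited proof. The gap is in how you carry out the last step.

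Your identification of the dominant profile is wrong. You assert that the profiles maximising $\E_{1/2}[\bar X_\bfk]$ have ``almost all $k$ classes of the maximal allowed size $\alpha-1$, together with at most a handful of smaller classes''. But by Lemma~\ref{lemma:boldkestimate} the \emph{average} class size at $k=\boldk_{\alpha-1}$ is $n/\boldk_{\alpha-1}=\alpha_0-1-2/\ln 2+o(1)$, which is smaller than $\alpha-1$ by roughly $2.9$. A profile in which almost every class has size $\alpha-1$ therefore cannot even be a complete $\boldk_{\alpha-1}$-profile: it uses too few colours. The genuine optimiser spreads its mass over several sizes strictly below $\alpha-1$ (this is exactly the content of the tame-profile analysis, cf.\ Definition~\ref{deftame} and Lemma~\ref{lemma:kstartame}). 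Once the extremal picture is wrong, your ``split off one size-$(\alpha-1)$ block'' heuristic and the resulting multiplier $\mu_{\alpha-1}/k^\star$ no longer describe the actual sensitivity of $E_{n,k,\alpha-1}$ to $k$; compare Lemma~\ref{lemma:delk}, which gives $\partial L_0/\partial k=\Theta(\log^2 n)$, i.e.\ a per-step factor of $\exp(\Theta(\log^2 n))$, a completely different order of magnitude from your $n^{0.05+\epsilon-o(1)}$.

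A secondary point: you invoke the extra hypothesis $\mu_\alpha\ge n^{0.05+\epsilon}$ to make your ratio large, but Lemma~\ref{lemma:lowerbound} is stated (and needed in \cite{heckel2023colouring}) without any such assumption. For the application in the present paper this would be harmless, but your argument as written proves a strictly weaker statement than the one claimed.
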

It is an easy observation that for any graph $G$ with independence number $\alpha(G)$ and $X_{\alpha(G)}$ independent sets of size $\alpha(G)$, 
\[\chi_{\alpha(G)-1}(G) \le \chi(G)+X_{\alpha(G)}.\]
This is because any colouring of $G$ contains at most $X_{\alpha(G)}$ colour classes of size $\alpha(G)$ (and no larger ones), and so we can obtain an $(\alpha(G)-1)$-bounded colouring by removing one vertex from each such colour class and giving it a new colour, introducing at most $X_{\alpha(G)}$  new colours in total. 
 So we have the following consequence of Lemma~\ref{lemma:lowerbound} (bounding $X_{\alpha} \le n^{1-0.99\epsilon}$ whp by the first moment method).
\begin{corollary}
Fix $\epsilon \in (0,1)$, let $n$ be such that $n^{0.05+\epsilon}\le \mu_\alpha \le n^{1-\epsilon}$, and let $G \sim G_{n, 1/2}$. Then, whp,
	\[
	\chi (G) \ge \boldk_{\alpha-1}-n^{1-0.9\epsilon}.
	\]		
\end{corollary}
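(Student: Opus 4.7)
The plan is to combine Lemma~\ref{lemma:lowerbound} with the two easy facts already hinted at in the paragraph preceding the corollary: an elementary deterministic inequality between $\chi_{\alpha-1}(G)$ and $\chi(G)$ for graphs of independence number $\alpha$, and a first moment control on the number $X_\alpha$ of independent sets of size exactly $\alpha$ in $G\sim G_{n,1/2}$.

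I would first record the deterministic inequality $\chi_{\alpha(G)-1}(G) \le \chi(G) + X_{\alpha(G)}$ for any graph $G$. The argument is one line: given an optimal proper colouring, each colour class of size $\alpha(G)$ is (the vertex set of) one of the $X_{\alpha(G)}$ maximum independent sets, so there are at most $X_{\alpha(G)}$ such classes, and no larger ones exist; removing a single vertex from each and giving it a brand-new private colour produces an $(\alpha(G)-1)$-bounded colouring on at most $\chi(G) + X_{\alpha(G)}$ colours. Under the hypothesis $n^{0.05+\epsilon} \le \mu_\alpha \le n^{1-\epsilon}$, the standard estimates recalled in \S\ref{section:alpha} give $\alpha(G) = \alpha$ whp, so on this event the inequality specialises to $\chi_{\alpha-1}(G) \le \chi(G) + X_\alpha$.

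Next I would bound $X_\alpha$ by Markov: since $\E_{1/2}[X_\alpha] = \mu_\alpha \le n^{1-\epsilon}$,
\[
\Pb_{1/2}\bigl( X_\alpha > n^{1-0.99\epsilon}\bigr) \le \frac{\mu_\alpha}{n^{1-0.99\epsilon}} \le n^{-0.01\epsilon} = o(1).
\]
Combining this with Lemma~\ref{lemma:lowerbound} and the deterministic inequality above yields, whp,
\[
\chi(G) \ge \chi_{\alpha-1}(G) - X_\alpha \ge \boldk_{\alpha-1} - 1 - n^{1-0.99\epsilon} \ge \boldk_{\alpha-1} - n^{1-0.9\epsilon},
\]
where the last step holds for all sufficiently large $n$ since $1 + n^{1-0.99\epsilon} = o(n^{1-0.9\epsilon})$. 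Since each ingredient is either directly quoted from the paper or a one-line moment calculation, I do not anticipate any genuine obstacle; the only point requiring care is choosing the Markov threshold so that both the additive $-1$ coming from Lemma~\ref{lemma:lowerbound} and the fluctuation bound on $X_\alpha$ comfortably fit inside the target slack $n^{1-0.9\epsilon}$, which is why one takes the slightly tighter exponent $0.99\epsilon > 0.9\epsilon$ in the first moment step.
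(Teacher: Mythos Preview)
Your proposal is correct and follows exactly the same route as the paper: the deterministic inequality $\chi_{\alpha(G)-1}(G)\le \chi(G)+X_{\alpha(G)}$, the whp identification $\alpha(G)=\alpha$ under the hypothesis, the first moment bound $X_\alpha\le n^{1-0.99\epsilon}$ whp, and Lemma~\ref{lemma:lowerbound}. You spell out the Markov step and the final absorption $1+n^{1-0.99\epsilon}\le n^{1-0.9\epsilon}$ more explicitly than the paper does, but there is no substantive difference.
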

So, under the conditions of Theorem~\ref{theorem:difference}, we can pick
\begin{equation} \label{eq:k1}
k_1= k_1(n) =\boldk_{\alpha-1}-n^{1-0.9\epsilon}
\end{equation}
and by the corollary, (B) holds.

\begin{remark}
	We could also use a slightly stronger lower bound for $\chi(G_{n,1/2})$ from \cite{HRHowdoes}, Lemmas~43 and 41, and then set $k_1=\boldk_{\alpha-1} - 2 \mu_\alpha/\alpha$, say, but this is not necessary for the proof of Theorem~\ref{theorem:difference}.
\end{remark}

\subsubsection{The upper bound on $\zeta(\Gnh)$}

We prove this bound by the second moment method. Recall the Paley-Zygmund inequality,
\begin{equation}\label{eq:paleyzygmund}
\Pb(Z>0) \ge \E[Z]^2/\E[Z^2],
\end{equation}
which holds for any random variable $Z \ge 0$ with finite variance. We will show the following proposition.

\begin{proposition}\label{proposition:main}
	Let $\epsilon>0$, suppose $n^{0.05+\epsilon} \le \mu_\alpha \le n^{1-\epsilon}$, and set
	\begin{equation}\label{eq:defkst}
		k^*= \boldk_{\alpha-1} - n^{1-\epsilon/2}. 
	\end{equation} 
There is an $(\alpha-1)$-bounded $k^*$-profile $\bfk^*=(k_u^*)_{u=1}^{\alpha-1}$ and a random variable $Z \ge 0$ so that 
\begin{enumerate}
	\item[(a)] $Z>0$ implies $X_{\bfk^*}^\mathrm{co} >0$, and
	\item[(b)] $\E_{1/2}[Z^2]/\E_{1/2}[Z]^2 <\exp(n^{0.99})$. 
\end{enumerate}	
\end{proposition}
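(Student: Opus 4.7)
The plan is to adapt the second-moment framework developed for the $(\alpha-1)$-bounded chromatic number in \cite{heckel2023colouring} to the cochromatic setting, exploiting the fact that cocolourings come with a bonus factor of roughly $2^k$ in the first moment. Precisely, for any $(\alpha-1)$-bounded $k$-profile $\bfk$ with $k_1$ singletons one has
\[
\E_{1/2}[X^{\mathrm{co}}_\bfk] = 2^{k-k_1}\,\E_{1/2}[X_\bfk],
\]
because in $G_{n,1/2}$ each class of size $u\ge 2$ may independently be an independent set or a clique (two disjoint equiprobable events), and different classes constrain disjoint pairs. The results of \cite{heckel2023colouring} imply that the total first moment for $(\alpha-1)$-bounded $k$-colourings near $\boldk_{\alpha-1}$ decreases by at most a polynomial factor per unit decrease in $k$, so shifting down by $\Delta = n^{1-\epsilon/2}$ reduces the colouring first moment by at most a factor $n^{O(\Delta)} = 2^{o(n/\log n)}$, which is comfortably dominated by the $2^{k^*}$ bonus. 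Hence $\E_{1/2}[X^{\mathrm{co}}_{\bfk^*}]$ is super-polynomially large for an appropriate $\bfk^*$.

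For $\bfk^*$ I would take the maximiser of $\E_{1/2}[X^{\mathrm{co}}_\bfk]$ (equivalently, since the $2^{k-k_1}$ factor is essentially $2^k$ for profiles with few singletons, of $\E_{1/2}[X_\bfk]$) among $(\alpha-1)$-bounded $k^*$-profiles; by \cite{heckel2023colouring} this is supported near sizes $\alpha-1$ (and $\alpha-2$), with $k_1 = o(k^*)$. For $Z$ I would mimic the construction of \cite{heckel2023colouring}: count unordered cocolourings with profile $\bfk^*$ whose internal statistics (class-size histogram, intersections with a fixed reference family) lie in a narrow typicality window. This makes part (a) automatic and keeps $\E_{1/2}[Z]$ within an $\exp(n^{o(1)})$ factor of $\E_{1/2}[X^{\mathrm{co}}_{\bfk^*}]$.

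For the second moment, a direct calculation, summing over consistent type assignments for a fixed pair of ordered partitions $(\Pi,\Pi')$, gives
\[
\frac{\E_{1/2}[(X^{\mathrm{co}}_{\bfk^*})^2]}{\E_{1/2}[X^{\mathrm{co}}_{\bfk^*}]^2} \;=\; 4^{-(k^*-k_1)} \cdot \frac{1}{P_{\bfk^*}^2}\sum_{\Pi,\Pi'} 2^{c(\Pi,\Pi')} \cdot 2^{\sum_{i,j}\binom{|V_i\cap V'_j|}{2}},
\]
where $c(\Pi,\Pi')$ is the number of components of the bipartite ``big-overlap'' graph with edges $\{(i,j):|V_i\cap V'_j|\ge 2\}$, and $k_1$ is the number of size-$1$ classes of $\bfk^*$. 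For pairs with all overlaps at most $1$ --- the overwhelmingly typical case --- one has $c = 2k^*$ and the summand equals $4^{k^*}$, which cancels the $4^{-(k^*-k_1)}$ prefactor up to a harmless $4^{k_1}\le\exp(n^{o(1)})$. Pairs with larger overlaps contribute more to $2^{\sum\binom{|V_i\cap V'_j|}{2}}$ but correspondingly less to $2^{c}$ (each merger in the overlap graph exchanges a factor of $2$ between the two), so the calculation reduces to essentially the same enumeration of overlap patterns as in \cite{heckel2023colouring}, which yields a ratio at most $\exp(n^{0.99})$.

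The main obstacle I foresee is verifying that, once the typicality conditions defining $Z$ are imposed, the contribution of pairs with many moderate overlaps --- which were controlled in \cite{heckel2023colouring} at $k=\boldk_{\alpha-1}$ --- remains under control when we shift down to $k^* = \boldk_{\alpha-1}-n^{1-\epsilon/2}$. The ratio analysis above depends only on $\bfk^*$ and the typical overlap distribution of random partitions of profile $\bfk^*$, neither of which is very sensitive to the precise value of $k$ in this range; however, one cannot quote the estimates of \cite{heckel2023colouring} verbatim, and I would expect to carefully redo the per-overlap-pattern bounds, using the slack of $\exp(n^{0.99})$ to absorb the $4^{k_1}$ singleton term and the logarithmic factors introduced by shifting $k$.
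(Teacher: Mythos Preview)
Your overall strategy---exploit the $2^k$ first-moment bonus for cocolourings, choose a near-optimal $(\alpha-1)$-bounded $k^*$-profile, and run the second-moment machinery of \cite{heckel2023colouring}---matches the paper. Your exact formula $\Pb_{1/2}(A^{\mathrm{co}}_\pi\cap A^{\mathrm{co}}_{\pi'})=2^{c(\pi,\pi')}\Pb_{1/2}(A_\pi\cap A_{\pi'})$ is in fact sharper than what the paper uses: the paper only needs the cruder bound $\Pb_{1/2}(A^{\mathrm{co}}_\pi\cap A^{\mathrm{co}}_{\pi'})\le 2^{2k-\ell}\Pb_{1/2}(A_\pi\cap A_{\pi'})$, where $\ell$ is the number of \emph{identical} parts (note $c\le 2k-\ell$, so your inequality is stronger). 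The paper then defines $Z$ via the specific events $B_\pi,C_\pi,D_\pi$ of \cite{heckel2023colouring} (together with their clique analogues), which restricts the sum to ``relevant'' pairs, and splits into the scrambled/middle/similar ranges according to $\lambda(\pi,\pi')$.

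There is, however, a genuine gap in your second-moment sketch. You write that ``the calculation reduces to essentially the same enumeration of overlap patterns as in \cite{heckel2023colouring}, which yields a ratio at most $\exp(n^{0.99})$''. This is not true as stated: at $k^*=\boldk_{\alpha-1}-n^{1-\epsilon/2}$ the \emph{colouring} second-moment ratio from \cite{heckel2023colouring} is dominated by the ``similar'' range, whose contribution is $n^{O(1)}/\E_m[\bar X_{\bfk^*}]=\exp\!\big(\Theta(n^{1-\epsilon/2}\log^2 n)\big)$, which for small $\epsilon$ far exceeds $\exp(n^{0.99})$. The point is precisely that the colouring second moment method \emph{fails} at $k^*$. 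What saves the cocolouring argument is that for similar pairs one has $\ell$ close to $k$, so the extra factor $2^{2k-\ell}/4^k=2^{-\ell}\approx 2^{-k}$ converts the bound $n^{O(1)}/\E[\bar X_{\bfk^*}]$ into $\exp(O(n^{1-c_0}/\log n))/\E[\bar X^{\mathrm{co}}_{\bfk^*}]$, and $\E[\bar X^{\mathrm{co}}_{\bfk^*}]=\exp(\Theta(n/\log n))$ makes this negligible. Your $2^{c}/4^{k}$ factor achieves the same thing (since $c\le 2k-\ell$), but you never isolate the similar range or explain that this is where the $2^k$ bonus is cashed in; the handwave ``each merger exchanges a factor of $2$'' only balances overlaps of size exactly $2$ and does not by itself control the near-diagonal.

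Finally, the obstacle you flag---whether the moderate-overlap bounds survive the shift from $\boldk_{\alpha-1}$ to $k^*$---is not the real issue. The lemmas in \cite{heckel2023colouring} are stated for arbitrary \emph{tame} profiles, and the paper simply verifies (via its Lemma on the special profile) that $\bfk^*$ is tame with constant $c=\epsilon/4$; the scrambled and middle ranges then transfer verbatim up to an $\exp(O(\log^2 n))$ model-change factor. The delicate range is the similar one, as above.
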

Proposition~\ref{proposition:main} and \eqref{eq:paleyzygmund} imply that
\begin{equation}\label{eq:probbound}
\Pb(\zeta(G_{n, 1/2}) \le k^*) = \Pb(X^\mathrm{co}_{\bfk^*} >0) > \exp \big( -n^{0.99}\big).
\end{equation}
This is not quite what we want yet, but with a standard trick first used by Frieze in \cite{frieze1990independence}, we can get a lower probability bound $1-o(1)$ by increasing the number of colours a little. For $G \sim G_{n, 1/2}$, consider the \emph{vertex exposure martingale} (for full details, see \S2.4 in \cite{janson:randomgraphs}): We reveal information on $G_{n, 1/2}$ step by step by revealing which edges are present between the first $k$ vertices, and we condition $\zeta(G)$ on this information, which defines a martingale. This martingale has bounded differences: Changing all the edges incident with any particular vertex can only change $\zeta(G)$ by $\pm1$ (since we can `fix' any given colouring by placing this vertex in its own colour class). Therefore, we may apply the Azuma-Hoeffding inequality (see Theorem 2.25 and Corollary 2.27 ~\cite{janson:randomgraphs}), giving for any $t >0$ that
\begin{equation} \label{eq:azumabound}
\Pb(|\zeta(G)-\E[\zeta(G)]|\ge t) \le 2 \exp (-t^2/(2n)).
\end{equation}
Comparing this bound with $t=n^{0.999}$ to \eqref{eq:probbound}, we must have $k^* \ge \E[\zeta(G_{n, 1/2})]-n^{0.999}$. Applying \eqref{eq:azumabound} again with $t=n^{0.999}$ shows that $P(\zeta(G_{n, 1/2})>k^*+2n^{0.999}) \rightarrow 0$, so whp,
\[
\zeta(G_{n, 1/2}) \le k^* + 2n^{0.999} =: k_2,
\]
establishing (C). Using this definition of $k_2$, \eqref{eq:defkst} and \eqref{eq:k1}, 
we have
\[
k_1-k_2 = n^{1-\epsilon/2}-2n^{0.999}-n^{1-0.9 \epsilon}\ge n^{1-\epsilon}
\]
if $\epsilon>0$ is small enough (which we can assume wlog) and $n$ is large enough. So (A) is fulfilled as well, giving Theorem~\ref{theorem:difference}.

It remains to prove Proposition~\ref{proposition:main} in order to complete the proof of Theorem~\ref{theorem:difference}.

\section{Proof of Proposition~\ref{proposition:main}} \label{section:propproof}

For a profile $\bfk$, let $\Pi_\bfk$ denote the set of ordered partitions of the vertex set $[n]$ with profile $\bfk$. Define the events
\begin{align*}
A_\pi &: \quad \text{$\pi$ is a colouring} \\
A^\textrm{co}_{\pi} &: \quad \text{$\pi$ is a cocolouring}.
\end{align*}
We will use the techniques developed in \cite{heckel2023colouring} to bound the second moment, translating the results via the following correspondence.

\begin{proposition}\label{prop:probabilities}
	For $k \ge 1$, let $\kp$ be a $k$-profile with $k_1=0$, and $\pi, \pi' \in \Pi_{\bfk}$. Then
\[\Pb_{1/2}(A_\pi^{\mathrm{co}}) = 2^k 	\Pb_{1/2}(A_\pi)\]
and, if $\pi$ and $\pi'$ have exactly $\ell$ parts in common, where $0 \le \ell \le k$, then
\[	\Pb_{1/2}(A_\pi^{\mathrm{co}} \cap A_{\pi'}^{\mathrm{co}}) \le 2^{2k-\ell} 	\Pb_{1/2}(A_\pi \cap A_{\pi'}).
\]
\end{proposition}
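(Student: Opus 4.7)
The plan is to decompose each cocolouring event as a disjoint union over the assignment of every part to either ``independent set'' or ``clique'', then exploit the fact that in $\Gnh$ each edge is independently $0$ or $1$, which makes being an independent set and being a clique equiprobable for a fixed part.

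For (i), since $k_1=0$ every part $V_i$ has size $\ge 2$, and hence the events ``$V_i$ is independent'' and ``$V_i$ is a clique'' are disjoint, each with probability $2^{-\binom{|V_i|}{2}}$. The edges involved lie in disjoint edge sets across $i$, so these events are mutually independent, giving
\[
\Pb_{1/2}(A_\pi^{\mathrm{co}})=\prod_{i=1}^k 2\cdot 2^{-\binom{|V_i|}{2}}=2^k\prod_{i=1}^k 2^{-\binom{|V_i|}{2}}=2^k\Pb_{1/2}(A_\pi).
\]

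For (ii), I would introduce, for a labelling $L\colon [k]\to\{\mathrm{ind},\mathrm{cl}\}$ of $\pi$ (and similarly $L'$ for $\pi'$), the event $B(L,L')$ that each $V_i$ has type $L(i)$ and each $V'_j$ has type $L'(j)$. Since $k_1=0$, distinct labellings yield disjoint events, and
\[
A_\pi^{\mathrm{co}}\cap A_{\pi'}^{\mathrm{co}}=\bigcupdot_{L,L'}B(L,L').
\]
Call a pair $(L,L')$ \emph{consistent} if $L(i)=L'(j)$ whenever $|V_i\cap V'_j|\ge 2$; inconsistent pairs force two contradictory states on some edge, so $\Pb_{1/2}(B(L,L'))=0$. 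For a consistent pair, $B(L,L')$ prescribes the state of each edge in $E(\pi)\cup E(\pi')$, where $E(\sigma):=\{\{u,v\}:u,v\text{ lie in the same part of }\sigma\}$. Thus
\[
\Pb_{1/2}(B(L,L'))=2^{-|E(\pi)\cup E(\pi')|}=\Pb_{1/2}(A_\pi\cap A_{\pi'}),
\]
where the last equality follows by taking $L,L'$ both identically $\mathrm{ind}$. The key observation is that this value does not depend on the labelling, which is precisely the edge-flipping symmetry of $\Gnh$.

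It then remains to bound the number of consistent labelling pairs. For each of the $\ell$ shared parts, $L$ and $L'$ must agree, contributing at most $2^\ell$ joint choices; the $k-\ell$ remaining parts of $\pi$ give at most $2^{k-\ell}$ further choices, and similarly for $\pi'$. Multiplying yields at most $2^{\ell}\cdot 2^{k-\ell}\cdot 2^{k-\ell}=2^{2k-\ell}$ consistent pairs, and summing $\Pb_{1/2}(B(L,L'))=\Pb_{1/2}(A_\pi\cap A_{\pi'})$ over them gives the claim. The only mildly delicate point is verifying disjointness of the $B(L,L')$ and the consistency condition cleanly—both of which rely on $k_1=0$—so the calculation itself is essentially bookkeeping.
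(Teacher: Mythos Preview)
Your proof is correct and takes essentially the same approach as the paper's: decompose the cocolouring event over labellings of parts as clique/independent, use $k_1=0$ for disjointness, observe that each consistent labelling has the same probability $\Pb_{1/2}(A_\pi\cap A_{\pi'})$, and count at most $2^{2k-\ell}$ such labellings since shared parts force agreement. Your version is simply more explicit in introducing the labellings $L,L'$ and the edge set $E(\pi)\cup E(\pi')$, whereas the paper compresses this into a couple of sentences.
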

\begin{proof}
	The first part follows easily by observing that once we have chosen which of the $k$ colour classes of $\pi$ are cliques and which are independent sets, the probability that $\pi$ is a cocolouring given these choices is exactly $\Pb_{1/2}(A_{\pi})$. As $k_1=0$, every colour class contains at least one potential edge, so the events associated with a particular choice are disjoint, giving $\Pb_{1/2}(A^\mathrm{co}_{\pi})=2^k \Pb_{1/2}(A_\pi)$. For the second part, observe that there are $2^{2k-\ell}$ possible choices as $\pi$ and $\pi'$ share $\ell$ parts. Not all of these choices for cliques and independent sets in $\pi$ and $\pi'$ may be compatible with each other (a designated `clique' in $\pi$ may share at least two vertices with an `independent set' in $\pi'$, or vice versa), but if they are, then the probability that both $\pi$ and $\pi'$ are cocolourings with these choices is exactly $\Pb(A_{\pi} \cap A_{\pi'})$.
	\end{proof}
From this proposition, it follows that for any $k$-profile $\bfk$ with $k_1=0$,
\begin{equation}\label{eq:firstmomentcocol}
\E_{1/2}[X_\bfk^\mathrm{co}] = \sum_{\pi \in \Pi_\bfk}\Pb(A_\pi^\mathrm{co})= 2^k \E_{1/2}[X_\bfk] .
\end{equation}
For the second moment, we could of course also bound
\[
\E_{1/2}[({X_\bfk^\mathrm{co}})^2] = \sum_{\pi, \pi' \in \Pi_\bfk}\Pb(A^\mathrm{co}_\pi \cap A^\mathrm{co}_\pi)\le 2^{2k} \E_{1/2}[X_\bfk^2],
\]
but this is not enough: We want to apply the second moment method to a $k^*$ which is smaller than the typical value of the chromatic number of $G_{n, 1/2}$, so the second moment moment can not work for normal $k$-colourings. Nevertheless, Proposition~\ref{prop:probabilities} is extremely  useful for reusing the second moment bounds established 
 in \cite{heckel2023colouring}, we simply need to split up the calculation into cases depending on how many parts $\pi$ and $\pi'$ have in common.
 
\subsection{Tame profiles} 

The notion of a tame profile was introduced in \cite{HRHowdoes}: These are bounded profiles which satisfy certain smoothness properties (a tail bound and a lower bound on the expected number of colourings). It turns out that we do not lose much by restricting ourselves to these tame profiles, as the `optimal' profiles $\bfk$ which maximise or near-maximise the expectation $\E[\bar X_\bfk]$ are tame.

The definition is somewhat technical and we state it here for completeness, although for the purposes of reading this paper it suffices to observe that by Lemma~\ref{lemma:kstartame} later, the profile $\bfk^*$ that we will consider (which is close to optimal) is tame. We restrict the definition to $(\alpha-1)$-bounded profiles here, as this is all that we need in this paper.

\begin{definition}[\cite{heckel2023colouring}, Def.~2.3]\label{deftame} 
	Let $\mathbf{k}=\mathbf{k}(n)$ be a sequence of complete $(\alpha-1)$-bounded $k$-profiles. 
	Then $\bfk$ is called \emph{tame}  if both of the following conditions hold. 
	\begin{enumerate}[a)]
		\itemsep2pt 
\item		
		There is an increasing function $ \gamma: \mathbb{N}_0\rightarrow \mathbb{R}$ with  $\gamma(x)\rightarrow \infty$ as $x \rightarrow \infty$ so that, if $n$ is large enough,
		\[{u k_u}/{n} < 2^{-(\alpha-u)\gamma(\alpha-u)}\]
		 for all $1 \le u \le \alpha-1$.
		\label{tame:tail}
		\item \label{largeexp} There is a constant $ c\in(0,1)$ so that $ \ln \E_m \big[\bar{X}_\kp\big] \gg -n^{1-c}$.
	\end{enumerate}
\end{definition}
Note that a) implies that there is a function $u^*= u^*(n) \sim \alpha \sim 2 \log_2 n$ so that $k_u =0$ unless $u^* \le u \le \alpha-1$, and we will from now on use the function $u^*$ whenever we consider a tame profile $\bfk$ without further explanation. In particular, $k_1=0$. Furthermore, it follows that for any tame $k$-profile $\bfk$,
\[k \sim \frac{n}{2\log_2 n}.\]

\subsection{The random variables $Z_k$ and $Z^{\mathrm{co}}_k$}

Let $\bfk$ be a tame $(\alpha-1)$-bounded $k$-profile for some $k \ge 1$, and $\pi \in \Pi_\bfk$. In this subsection we define a suitable random variable $Z^\mathrm{co}_\bfk$, and later we will set $Z=Z^{\mathrm{co}}_{\bfk^*}$ for the special $k^*$-profile $\boldk^*$ as in Proposition~\ref{proposition:main}.

The definition imposes heavy structural conditions on pairs of partitions $\pi, \pi'$ affecting the second moment of $Z_\bfk$ while leaving the first moment almost unchanged from $\E_{1/2}[X_\bfk]$, as we will see in \eqref{eq:Zfirstmoment}. This is what made it feasible to obtain the bounds on $\E[Z_\bfk^2]$ in \cite{heckel2023colouring}.

 We start by recalling the following definition from \cite{heckel2023colouring} which quantifies how a set is split up by a partition.
\begin{definition}[\cite{heckel2023colouring}, Def.~4.1] \label{def:tcomposed}
	For an ordered partition $\pi=(V_1, \dots, V_k)$ of $V=[n]$ and a set $S \subset V$, we write
	\[
	z(S, \pi):= \big| \{i: V_i \cap S \neq \emptyset \big\}|.
	\]
	If $z(S, \pi)=z$, we say that $S$ is \emph{$z$-composed} with respect to $\pi$.  
\end{definition}

Additionally to the event $A_\pi$ that $\pi$ is a colouring, in \cite{heckel2023colouring} the following events were defined. (We tweak the definition of $D_\pi$ slightly, writing $ \frac 12 \ln^3 n$ instead of $\ln^3 n$.)

\begin{itemize}
 \item[$B_\pi:$] If $S \subset V$ is an independent set 
 with $u^* \le |S| \le \alpha-1$, then
 \[
 z(S,\pi)\le 2  ~~\text{ or }~~ z(S,\pi) \ge |S| - 2 (\alpha - |S|)-1.
 \]
 That is, $S$ is composed of vertices from at most two parts of $\pi$, or from at least $|S| - 2 (\alpha - |S|)-1$ parts.
 
 \item[$C_\pi:$] If $S\subset V$ is an independent set 
 such that $u^*\le |S|\le \alpha-1$ and such that $z(S,\pi)= 2$, then there are parts $V_i$ and $V_j$ such that 
 \[
 |S \cap V_{i}|= 1 ~~\text{ and }~~ |S \cap V_j|=|S|-1.
 \]
 
 \item[$D_\pi:$] There are at most $ \frac 12 \ln^3 n$ independent sets $S$ of size between $u^*$ and $\alpha-1$ such that 
 \[
 z(S,\pi)= 2
 \quad
 \text{and}
 \quad
 \text{there is a part $V_i$ such that  $|S \cap V_i| \ge |V_i|-1$}.
 \]
\end{itemize}
Now let
\[
Z_\bfk = \sum_{\pi \in \Pi_{\bfk}} \mathbbm{1}_{A_\pi \cap B_\pi \cap C_\pi \cap D_\pi}.
\]
In \cite{heckel2023colouring}, Lemmas 5.1, 5.2 and 5.3, it was shown that if $\kp$ is a tame $(\alpha-1)$-bounded profile, then (for both $G_{n,m}$ and $G_{n, 1/2}$) by the first moment method
\begin{equation} \label{eq:zeroevents}
\Pb(\overline{B_\pi} ~|~ A_\pi)=o(1), \quad  \Pb(\overline{C_\pi} ~|~ A_\pi)=o(1), \quad
\Pb(\overline{D_\pi} ~|~ A_\pi)= o(1)
\end{equation}
(it does not matter that we changed the definition of $D_\pi$ slightly, as the proof goes through exactly the same for $\frac 12 \log^3 n$ --- in the proof in \cite{heckel2023colouring} it is shown that the expected number of independent sets as in the definition of $D_{\pi}$ is $O(\log^2 n ) \ll \frac 12 \log^3 n$). This 
immediately implies
\begin{equation}\label{eq:Zfirstmomentold}
\E_m[Z_\bfk] \sim \E_m[X_\bfk] \text{ and } \E_{1/2}[Z_\bfk] \sim \E_{1/2}[X_\bfk]. 
\end{equation}
Furthermore, note that \eqref{eq:zeroevents} implies
\[
\Pb_{1/2}(\overline{B}_\pi ~|~ A^\mathrm{co}_\pi)=o(1), \quad  \Pb_{1/2}(\overline{C_\pi} ~|~ A^\mathrm{co}_\pi)=o(1), \quad
\Pb_{1/2}(\overline{D_\pi} ~|~ A^\mathrm{co}_\pi)= o(1).
\]
Indeed, note that $A^\mathrm{co}_{\pi}$ is the union of $2^k$ disjoint events $E_i, 1 \le i \le 2^k$, one for each choice of which colour class is a clique and which is an independent set. $G_{n, 1/2}$ conditional on some $E_i$ is simply $G_{n, 1/2}$ with some edges (the cliques) and non-edges (the independent sets) fixed. Since $\overline{B_\pi}$, $\overline{C_\pi}$, $\overline{D_\pi}$ are down-sets in the edges of $G_{n, 1/2}$, we have for example $\Pb_{1/2} (\overline{B_\pi}|E_i) \le \Pb_{1/2} (\overline{B_\pi}|A_\pi)$,
and so
\[
\Pb_{1/2} (\overline{B_\pi}~|~ A^\mathrm{co}_\pi) = \Pb_{1/2} (\overline{B_\pi}~|~ \bigcupdot_{i=1}^{2^k}E_i) \le \Pb_{1/2} (\overline{B_\pi}~|~A_\pi) = o(1).
\]
Define the events $B_\pi^\mathrm{clique}$, $C_\pi^\mathrm{clique}$, $D_\pi^\mathrm{clique}$ in the same way as $B_\pi$, $C_\pi$, $D_\pi$, only replacing `independent set' by `clique' in the definitions. By symmetry, it follows that 
\[
\Pb_{1/2}(\overline{B_\pi}^\mathrm{clique} ~|~ A^\mathrm{co}_\pi)=o(1), \quad  \Pb_{1/2}(\overline{C_\pi}^\mathrm{clique}  ~|~ A^\mathrm{co}_\pi)=o(1), \quad
\Pb_{1/2}(\overline{D_\pi}^\mathrm{clique}  ~|~ A^\mathrm{co}_\pi)= o(1).
\]
Set
\[
B_\pi^ \mathrm{co} = B_\pi \cap B_\pi^\mathrm{clique}, \quad C_\pi^ \mathrm{co} = C_\pi \cap C_\pi^\mathrm{clique}, \quad D_\pi^ \mathrm{co} = D_\pi \cap D_\pi^\mathrm{clique}
\]
and 
\begin{equation}\label{eq:defZ}
Z_\bfk^\mathrm{co} = \sum_{\pi \in \Pi_{\bfk}} \mathbbm{1}_{A_\pi^\mathrm{co} \cap B_\pi^\mathrm{co} \cap C_\pi^\mathrm{co} \cap D_\pi^\mathrm{co}},
\end{equation}
then the above implies immediately that
\begin{equation} \label{eq:Zfirstmoment}
\E_{1/2}[Z_\bfk^\mathrm{co}] \sim \E_{1/2}[X_\bfk^\mathrm{co}].
\end{equation}

\subsection{Relevant pairs}

Let $\kp$ be a tame $(\alpha-1)$-bounded $k$-profile. The point of the definition of $Z^\mathrm{co}_\bfk$ is that, when considering
\[
\E_{1/2}[(Z^\mathrm{co}_\bfk)^2] = \sum_{\pi, \pi' \in \Pi_\bfk} \Pb(A_\pi^\mathrm{co} \cap B_\pi^\mathrm{co} \cap C_\pi^\mathrm{co} \cap D_\pi^\mathrm{co} \cap A_{\pi'}^\mathrm{co} \cap B_{\pi'}^\mathrm{co} \cap C_{\pi'}^\mathrm{co} \cap D_{\pi'}^\mathrm{co}),
\]
the probability for a pair $(\pi, \pi')$ in the sum is $0$ unless the colour classes of $\pi'$ do not violate any of the events $B_\pi^\mathrm{co} \cap C_\pi^\mathrm{co} \cap D_\pi^\mathrm{co} $, and vice versa. The set of \emph{relevant pairs} --- those where the probability above is not $0$ --- is given in the following definition.

 \begin{definition}\label{def:tamepairs}[\cite{heckel2023colouring}, Def.~6.1]
 	Let $\pi=(V_i)_{i=1}^k$ and $\pi'=(V'_j)_{j=1}^k$ be ordered partitions with profile $\kp$. Then we say that $(\pi, \pi')$ is \emph{relevant} if all of the following hold.
 	\begin{enumerate}[a)]
 		\item 
 		\begin{enumerate}[1.]
 			\item If $V_j'$ is a part of $\pi'$ of size $u$, then 
 			\[z(V_j', \pi) \le 2\quad \text{or} \quad z(V_j', \pi) \ge u-2(\alpha-u)-1.\]
 			\item If $z(V_j', \pi) = 2$ for some part $V_j'$ of $\pi'$, then there are parts $V_{i_1}$ and $V_{i_2}$ of $\pi$ such that 
 			\[|V_j' \cap V_{i_1}|=1\quad \text{and} \quad |V_j'\cap V_{i_2}|=|V_j'|-1 .\]
 			\item There are at most $\ln^3 n$ parts $V_j'$ of $\pi'$ such that 
 			\[z(V_j', \pi)=2, \quad \text{and} \quad \text{there is a part $V_i$ of $\pi$ such that } |V_j' \cap V_i| \ge |V_i|-1.\] 
 		\end{enumerate}
 		\item All of the above also hold with $\pi$ and $\pi'$ swapped.
 	\end{enumerate}
 	Let $\Pi_\mathrm{rel}= \Pi_\mathrm{rel}(\kp)$ denote the set of all relevant pairs  $(\pi, \pi')$ of vertex partitions.
 \end{definition} 
Then we have\footnote{Note that the reason for slightly altering the definition of $D_\pi$ was so that we would still only have $\ln^3 n$ (and not $2\ln^3 n$) parts in part 3 of a) as we did in \cite{heckel2023colouring}, even though these parts now come from the two events $D_\pi$ and $D_\pi^\mathrm{clique}$.}
\begin{equation}\label{eq:secondmomentpirel}
\E_{1/2}[(Z^\mathrm{co}_\bfk)^2] = \sum_{(\pi, \pi') \in \Pi_\mathrm{rel}} \Pb(A_\pi^\mathrm{co} \cap B_\pi^\mathrm{co} \cap C_\pi^\mathrm{co} \cap D_\pi^\mathrm{co} \cap A_{\pi'}^\mathrm{co} \cap B_{\pi'}^\mathrm{co} \cap C_{\pi'}^\mathrm{co} \cap D_{\pi'}^\mathrm{co}) \le \sum_{(\pi, \pi') \in \Pi_\mathrm{rel}}  \Pb(A_\pi^\mathrm{co} \cap A_{\pi'}^\mathrm{co}),
\end{equation}
and our aim is to bound
\[
\frac{\E_{1/2}[(Z^\mathrm{co}_\bfk)^2]}{ \E_{1/2}[Z^\mathrm{co}_\bfk]^2} \le \frac 1 { \E_{1/2}[Z^\mathrm{co}_\bfk]^2}  \sum_{(\pi, \pi') \in \Pi_\mathrm{rel}}  \Pb(A_\pi^\mathrm{co} \cap A_{\pi'}^\mathrm{co})
< \exp(n^{0.99})\]
for an appropriate choice of profile $\bfk$.

\subsection{Bounding the second moment}

We  split up the second moment calculation into three ranges, depending on how similar the two partitions $(\pi, \pi')\in \Pi_\mathrm{rel}$ are. Given $\pi, \pi' \in \Pi_\bfk$ for some tame sequence $\kp=(k_u)_{u=u^*}^{\alpha-1}$ of profiles, let $\ell_u = \ell_u(\pi, \pi') \le k_u$ denote the number of parts of size $u$ which are the identical in $\pi$ and $\pi'$ (that is, the number of vertex sets $U$ such that $|U|=u$ and $U$ is a part both in $\pi$ and in $\pi'$), and let
\[
\lambda = \lambda(\pi, \pi') = \sum_{u^* \le u \le \alpha-1}  \ell_u u/n
\]
be the fraction of vertices contained in identical parts.  Following \cite{heckel2023colouring}, let $c \in (0,1)$ be the constant from the Definition~\ref{deftame} of the tame sequence $\bfk$, set $c_0=c/3 \in (0, \frac 13)$, and let
\begin{equation}
	\label{eq:defofscrambledetc}
	\begin{split}
		\Pi_{\text{scrambled}}
		&= \left\{ (\pi, \pi') \in \Pi_{\mathrm{rel}} \mid \lambda(\pi, \pi') < {\ln^{-3} n} \right\},  \\
		\Pi_{\text{middle}}
		&= \left\{ (\pi, \pi') \in \Pi_{\mathrm{rel}} \mid  {\ln^{-3} n} \le \lambda(\pi, \pi') \le 1-n^{-{c_0}}\right\},  \\
		\Pi_{\text{similar}}
		&= \left\{ (\pi, \pi') \in \Pi_{\mathrm{rel}} \mid \lambda(\pi, \pi') > 1- n^{-{c_0}}\right\}.
	\end{split}    
\end{equation}
Then we have the following results from \cite{heckel2023colouring}. They are phrased in a more general way in \cite{heckel2023colouring}, we state them here under the conditions we need them, that is, $p=1/2$, $a=\alpha-1$, 
 recalling that $k\sim \frac{n}{2\log_2n}$ if a $k$-profile $\kp$ is tame, and from \eqref{eq:mualpha} that $\mu_{\alpha-1} = \Theta(\mu_\alpha n/\log n)$ and $\mu_\alpha \ge 1$.

\begin{lemma}[\cite{heckel2023colouring}, Le.~6.3]
	\label{lemmascrambled} 
	Let $\kp=\kp(n)$ be a tame $(\alpha-1)$-bounded profile, then
	\begin{equation*}\frac{1}{\E_m[Z_\kp]^2}\sum_{(\pi,\pi') \in \Pi_{\textup{scrambled}}} \Pb_m\left(A_{\pi} \cap A_{\pi'}\right) \le \exp \big( 
		O( n / \mu_\alpha)+o(1)		\big).
		\end{equation*}
\end{lemma}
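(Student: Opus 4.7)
The plan is to carry out the standard second-moment computation for tame colouring profiles, specialised to the scrambled regime. Since $\kp$ is tame, \eqref{eq:Zfirstmomentold} gives $\E_m[Z_\kp]\sim \E_m[X_\kp]$, so it suffices to bound the scrambled contribution to $\E_m[X_\kp^2]$ against $(\E_m[X_\kp])^2$. I would parametrise every ordered pair $(\pi,\pi')\in \Pi_\kp\times \Pi_\kp$ by its overlap matrix $A=(a_{ij})$ with $a_{ij}=|V_i\cap V'_j|$, and write $g(A)=\sum_{i,j}\binom{a_{ij}}{2}$ for the number of vertex pairs lying inside a common part of both $\pi$ and $\pi'$.

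The joint event $A_\pi\cap A_{\pi'}$ forbids $2f_\kp-g(A)$ distinct edges, and Lemma~\ref{gnmlemma} with $p=q=1/2$, $b=2$ gives
\[
\Pb_m(A_\pi\cap A_{\pi'})\sim 2^{-2f_\kp+g(A)}\exp\!\left(-\tfrac{(2f_\kp-g(A))^2}{n^2}\right),
\]
while $\E_m[X_\kp]\sim P_\kp \cdot 2^{-f_\kp}\exp(-f_\kp^2/n^2)$. In the ratio, the quadratic Gaussian factors cancel up to $\exp(O(f_\kp\, g(A)/n^2))$, a subexponential correction that will be absorbed at the end.

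Next, I would group pairs by $A$. The number of ordered realisations of a given overlap matrix is $N(A)=n!/\prod_{i,j}a_{ij}!$, adjusted for the profile symmetries already present in $P_\kp^2$. After substitution, the required bound becomes
\[
\sum_{A \text{ admissible}}\frac{N(A)\, 2^{g(A)}}{P_\kp^2}\le \exp\bigl(O(n/\mu_\alpha)+o(1)\bigr),
\]
where the sum runs over overlap matrices compatible with $\kp$ and with the relevance and scrambled constraints. I would attack this via a multivariate Laplace / maximum-entropy argument on the simplex of admissible matrices. The unique critical point is the \emph{equitable overlap} $a^\star_{ij}\approx |V_i||V'_j|/n$, at which $g(A^\star)=O(\alpha^2)$ thanks to the tameness tail bound; away from $A^\star$, Kullback--Leibler divergence gives exponential suppression.

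The main obstacle is making the Laplace estimate uniform across the entire scrambled range. Two structural facts are essential: (i) the scrambled constraint $\lambda(\pi,\pi')<\ln^{-3}n$ excludes pairs with many identical parts, which would otherwise push $g(A)=\Omega(f_\kp)$ and spoil the bound; and (ii) the relevance conditions of Definition~\ref{def:tamepairs} eliminate intermediate values of $z(V'_j,\pi)$, leaving only the two extreme regimes per part and simplifying the convexity analysis. The residual multiplicative error after the Laplace step comes from Stirling corrections to the multinomial counts, and scales as $\exp(O(n/\mu_\alpha))$ because the effective number of near-optimal parts available to each vertex is of order $\mu_\alpha$. The full verification is the content of \S6 of \cite{heckel2023colouring}, which can be applied here as a black box.
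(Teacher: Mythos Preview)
The paper does not prove this lemma; it is quoted directly from \cite{heckel2023colouring} (Lemma~6.3) and used as a black box. Your final sentence does the same, so in that respect your proposal matches the paper exactly, and nothing further is required.

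The heuristic sketch you add beforehand is broadly in the right spirit --- overlap-matrix parametrisation, Lemma~\ref{gnmlemma} for the $G_{n,m}$ probabilities, and an entropy/Laplace analysis around the product overlap --- but two points are imprecise. First, your cancellation of the Gaussian factors is not exact: after dividing by $\E_m[X_\kp]^2$ one is left with $\exp\bigl((-2f_\kp^2+4f_\kp g(A)-g(A)^2)/n^2\bigr)$, not just $\exp\bigl(O(f_\kp g(A)/n^2)\bigr)$; the residual $-2f_\kp^2/n^2=-\Theta(\log^2 n)$ term is negative and so harmless for an upper bound, but it is not ``cancelled''. Second, your explanation of the $\exp(O(n/\mu_\alpha))$ term as ``Stirling corrections'' tied to ``the effective number of near-optimal parts available to each vertex being of order $\mu_\alpha$'' is not the mechanism: the $n/\mu_\alpha$ contribution arises from the combinatorics of (near-)identical colour classes of size close to $\alpha-1$, essentially a birthday effect governed by $k_u^2$ against the number of independent $u$-sets $\mu_u$, with the dominant $u$ giving order $k^2/\mu_{\alpha-1}=O(n/\mu_\alpha)$. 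These are minor quibbles in an informal outline; since the lemma is imported wholesale, they do not affect the validity of the proposal.
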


To state the next lemma, we need some notation: For two profiles $\bfk=(k_u)_u, \boldsymbol{\ell}=(\ell_u)_{u}$, we write $\boldsymbol{\ell} \le \bfk$ if $\ell_u \le k_u$ for all $u$. The condition \eqref{eq:lowerboundbeta} below is slightly technical; for the purposes of this paper it suffices to know that by Lemma~\ref{lemma:kstartame} later, the profile $\bfk^*$ which we will consider fulfils this condition.
\begin{lemma}[\cite{heckel2023colouring}, Le.~6.4] \label{lemmamiddle} 
Fix $\epsilon' > 0$ and suppose $\mu_\alpha \ge n^{\epsilon'}$, and let $\kp=\kp(n)$ be a tame $(\alpha-1)$-bounded profile. Furthermore, suppose that for any fixed ${\delta}>0$, if $n$ is large enough, then for all colouring profiles $\boldsymbol{\ell} \le \bfk$ such that ${\delta} \le \sum_{1 \le u \le \alpha-1} \lambda_u \le 1-{\delta}$ (where $\lambda_u = \ell_u u/n$),
\begin{equation}\label{eq:lowerboundbeta}
	\E_{1/2}\left[\bar{X}_{\boldsymbol{\ell}}\right] \ge \exp \Big( \ln^6 n \Big) \prod_{1 \le u \le a}{\binom{k_u}{\ell_u}}^2 .    \end{equation}
Then 
	\[
\frac{1}{\E_m[Z_\kp]^2}\sum_{(\pi,\pi') \in \Pi_{\textup{middle}}} \Pb_m\left(A_{\pi} \cap A_{\pi'}\right) =o(1).
	\]
\end{lemma}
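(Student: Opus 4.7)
The plan is to group the pairs in $\Pi_{\text{middle}}$ by their \emph{shared profile} $\bfl=(\ell_u)_{u=u^*}^{\alpha-1}$, where $\ell_u$ counts the size-$u$ parts common to both $\pi$ and $\pi'$. Any middle pair has $\bfl \le \bfk$ and $\ln^{-3}n \le \sum_u \ell_u u/n \le 1-n^{-c_0}$. For each such $\bfl$, I decompose $\pi$ and $\pi'$ into a common partition of $n_\bfl := \sum_u \ell_u u$ vertices with profile $\bfl$ together with two residual partitions of the remaining $n-n_\bfl$ vertices, each with profile $\bfk - \bfl$. The number of pairs realising a fixed shared partition picks up a factor $\prod_u \binom{k_u}{\ell_u}^2$ (choosing which parts of $\pi$ and of $\pi'$ are the shared ones), and the joint probability $\Pb_m(A_\pi \cap A_{\pi'})$ factorises across the shared part and the two residuals, up to a correction for vertex pairs that lie inside a residual part of both $\pi$ and $\pi'$.

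The aim of the main estimate is to show
\[ S_\bfl \;:=\; \sum_{(\pi,\pi') \text{ with shared profile } \bfl} \Pb_m(A_\pi \cap A_{\pi'}) \;\le\; (1+o(1))\, \frac{\E_m[X_\bfk]^2 \prod_u \binom{k_u}{\ell_u}^2}{\E_m[\bar X_\bfl]}\cdot R_\bfl, \]
where $R_\bfl$ absorbs the overcount from ``accidentally'' co-forbidden pairs inside residual parts of both partitions. Using Lemma~\ref{gnmlemma} to pass between $G_{n,m}$ and $G_{n,1/2}$ expectations, and the first-moment asymptotics $\E_m[Z_\bfk] \sim \E_m[X_\bfk]$ from \eqref{eq:Zfirstmomentold}, the ratio $S_\bfl/\E_m[Z_\bfk]^2$ is then controlled by $\prod_u \binom{k_u}{\ell_u}^2 / \E_{1/2}[\bar X_\bfl]$ times a mild factor depending on $R_\bfl$. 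On the ``bulk'' middle range $\lambda(\bfl) \in [\delta, 1-\delta]$ with a small constant $\delta>0$, assumption \eqref{eq:lowerboundbeta} kills the combinatorial factor, giving $S_\bfl/\E_m[Z_\bfk]^2 \le \exp(-\ln^6 n + \ln R_\bfl)$. Since there are at most $n^{O(\log n)}$ profiles $\bfl \le \bfk$, summing still yields $o(1)$.

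The main obstacle is two-fold. First, the overcount $R_\bfl$ must be kept comfortably below the $\exp(\ln^6 n)$ slack offered by \eqref{eq:lowerboundbeta}. This is precisely where the relevance conditions a.1--a.3 of Definition~\ref{def:tamepairs} enter: they force each residual part of $\pi'$ to intersect residual parts of $\pi$ either in almost a single vertex or in almost the whole part, and they limit the number of ``costly'' parts by $\ln^3 n$, so the contribution of accidental overlaps to $\ln R_\bfl$ can be bounded by something like $O(\ln^4 n)$. Second, \eqref{eq:lowerboundbeta} is only assumed on the bulk range $\lambda(\bfl) \in [\delta, 1-\delta]$, so the ``near-boundary'' regimes $\lambda(\bfl) \in [\ln^{-3} n, \delta)$ and $(1-\delta, 1-n^{-c_0}]$ need separate treatment. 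In those regimes I would combine the tail property a) of Definition~\ref{deftame} (which forces most of $\bfk$ to sit at sizes close to $\alpha-1$) with a direct entropy/counting bound for $\E_{1/2}[\bar X_\bfl]$, exploiting the hypothesis $\mu_\alpha \ge n^{\epsilon'}$ to ensure enough headroom; this mirrors the boundary-case handling in \cite{heckel2023colouring} and is the most delicate part of the argument.
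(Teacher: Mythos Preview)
The paper does not prove this lemma at all: it is imported verbatim from \cite{heckel2023colouring} (their Lemma~6.4) and used as a black box, with no argument given in the present paper. There is therefore nothing here to compare your proposal against.

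For what it is worth, your outline is broadly in line with the strategy of \cite{heckel2023colouring}: grouping relevant pairs by their shared profile $\bfl$, factoring the contribution into the shared part and two residuals, using the structural constraints in Definition~\ref{def:tamepairs} to bound the residual-overlap correction, and invoking hypothesis~\eqref{eq:lowerboundbeta} on the bulk range with separate near-boundary arguments. But what you have written is a proof \emph{plan}, not a proof. The two places that carry essentially all the difficulty are left unexecuted: you assert $\ln R_\bfl = O(\ln^4 n)$ from the relevance conditions without deriving it, and you wave at the boundary regimes $\lambda(\bfl)\in[\ln^{-3}n,\delta)$ and $(1-\delta,1-n^{-c_0}]$ as ``mirroring'' \cite{heckel2023colouring} without carrying them out. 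In particular, the upper boundary near $1-n^{-c_0}$ is where condition~a.3 (the $\ln^3 n$ cap) and the tameness tail bound are genuinely needed and interact in a nontrivial way; your sketch does not indicate how you would close that case.
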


\begin{lemma}[\cite{heckel2023colouring}, Le.~6.5]
	\label{lemmasimilar}
Let $\kp=\kp(n)$ be a tame $(\alpha-1)$-bounded profile, 
	then
	\[
\frac{1}{\E_m[Z_\kp]^2}\sum_{(\pi,\pi') \in \Pi_{\textup{similar}}} \Pb_m\left(A_{\pi} \cap A_{\pi'}\right) \le  \frac{n^{O(1)}}{\E_m[\bar X_\bfk]} \enspace .
	\]
\end{lemma}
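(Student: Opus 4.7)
The plan is to parameterise each similar pair $(\pi, \pi') \in \Pi_{\text{similar}}$ by its \emph{shared profile} $\bfl = (\ell_u)_u$ with $\bfl \le \bfk$, where $\ell_u$ is the number of parts of size $u$ that appear identically in both $\pi$ and $\pi'$. The similarity condition $\lambda(\pi,\pi') > 1 - n^{-c_0}$ translates to $n' := n - \sum_u u \ell_u < n^{1-c_0}$, so the non-shared vertex set $V \setminus W$ has size $n'$, and the non-shared parts of each of $\pi$ and $\pi'$ form ordered partitions of $V \setminus W$ with profile $\bfk - \bfl$. The hope is that the ``disagreement'' lives on a tiny vertex set, so the whole problem reduces to understanding a small residual colouring calculation.

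For each fixed $\bfl$, I would count ordered pairs with shared profile $\bfl$ as $P_\bfl$ ways to choose the ordered partial partition supplying the shared parts, times $P_{\bfk-\bfl}^2$ ways to complete to $\pi$ and $\pi'$ on the remaining vertices, times a combinatorial factor of order $\prod_u \binom{k_u}{\ell_u}$ for interleaving shared and non-shared blocks within each partition. To bound $\Pb_m(A_\pi \cap A_{\pi'})$, I would split the set of forbidden pairs as the disjoint union of pairs inside shared parts ($f_\bfl$ of them, all inside $\binom{W}{2}$) and pairs inside non-shared parts of $\pi$ or $\pi'$ (a subset of $\binom{V \setminus W}{2}$, hence at most $\binom{n^{1-c_0}}{2}$ in total). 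Both quantities are $o(n^{4/3})$, so Lemma~\ref{gnmlemma} applies and the joint probability factors approximately as (probability the shared parts are independent) times (probability the two residual colourings are valid on $V \setminus W$), with a $(1+o(1))$-transfer between $G_{n,m}$ and $G_{n,1/2}$.

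Summing over $\bfl$, I expect a bound of the form $\sum_\bfl \E_m[\bar X_\bfl^{\mathrm{partial}}] \cdot n^{O(1)}$, since both the completion counts $P_{\bfk-\bfl}$ and the residual probability factors involve only the small set $V \setminus W$ of size $<n^{1-c_0}$ and are therefore at most $n^{O(1)}$. Dividing by $\E_m[Z_\bfk]^2 \sim \E_m[X_\bfk]^2$, the dominant contribution comes from $\bfl = \bfk$ (i.e.\ $\pi = \pi'$): here the ``diagonal'' sum is exactly $\E_m[X_\bfk]$, and upon dividing by $\E_m[X_\bfk]^2$ one obtains $\E_m[\bar X_\bfk]^{-1}$ up to a $\prod_u k_u!^{-1}\le 1$ factor. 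The off-diagonal terms with $\bfl \ne \bfk$ contribute at most an additional $n^{O(1)}$ factor, yielding the claimed bound.

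The main obstacle will be carefully accounting for the combinatorial factors---especially the ordering factors $\prod_u k_u!$, the ``choose which positions are shared'' factors $\prod_u \binom{k_u}{\ell_u}$, and $P_{\bfk-\bfl}$---to ensure that no hidden super-polynomial overcounting sneaks in when summing over the many possible shared profiles $\bfl$. The relevance conditions of Definition~\ref{def:tamepairs} should help here, since they severely restrict how the non-shared parts of $\pi'$ are allowed to intersect the parts of $\pi$ (and vice versa), keeping the effective number of ways to ``perturb'' $\pi$ into $\pi'$ polynomial in $n$ rather than exponential.
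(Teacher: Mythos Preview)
This lemma is not proved in the present paper---it is quoted from \cite{heckel2023colouring} (Lemma~6.5 there)---so there is no in-paper argument to compare against. I will therefore assess your sketch on its own terms.

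Your overall plan---parametrise similar pairs by the shared sub-profile $\bfl$, separate the joint probability into a ``shared'' contribution on $W$ and a ``residual'' contribution on $V\setminus W$, and identify $\bfl=\bfk$ as the main term---is the natural approach. However, there is a genuine gap. The assertion that ``the completion counts $P_{\bfk-\bfl}$ \dots\ are therefore at most $n^{O(1)}$'' is false: with $n' := |V\setminus W|$ as large as $n^{1-c_0}$ and all residual parts of size $\sim 2\log_2 n$, one has $\log P_{\bfk-\bfl} = \Theta(n'\log n)$, not $O(\log n)$. What must really be shown is that the \emph{combined} residual factor---number of \emph{relevant} residual pairs times the corresponding probability---is $n^{O(1)}$, and this does not follow from the size of $V\setminus W$ alone. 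If one ignores relevance and uses only the crude bound $y \ge f_{\bfk-\bfl}$ on the number of distinct residual forbidden pairs, the resulting estimate $P_{\bfk-\bfl}^2 \cdot 2^{-f_{\bfk-\bfl}} = \exp\big((1-2c_0+o(1))\,n'\log n\big)$ is, for $c_0<\tfrac13$, super-polynomial and swamps everything.

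The substance of the proof is therefore precisely what you defer to your final paragraph: one must exploit the relevance constraints of Definition~\ref{def:tamepairs} to force a dichotomy on every non-shared part $V'_j$ of $\pi'$. Either $V'_j$ is highly composed with respect to $\pi$ (so its internal pairs barely overlap the forbidden pairs of $\pi$, and this part of $A_{\pi'}$ is nearly independent of $A_\pi$), or $V'_j$ is a one- or two-vertex perturbation of a single part of $\pi$ (and condition~a)3 caps the number of such parts at $\ln^3 n$). Quantifying both cases and summing is the real content of the lemma; without it, the accounting in your third paragraph does not go through.
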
 

Now recall that, for a tame $(\alpha-1)$-bounded profile $\bfk$, all colour classes are in size between $u^* \sim 2 \log_2n$ and $\alpha-1 \sim 2 \log_2 n$. In particular, the profile $\bfk$ has
\[
f_\bfk = \sum_{u=u^*}^{\alpha-1} {u \choose 2} k_u \sim n \log_2 n
\]
`forbidden' edges (edges that may not be present for a partition $\pi$ to be a colouring). By this and Lemma~\ref{gnmlemma}, for $\pi, \pi \in \Pi_\bfk$,
\begin{equation}\label{eq:gnhtransfer}
\frac{\Pb_m(A_\pi)}{\Pb_{1/2}(A_\pi)}, ~~~~ \frac{\Pb_m(A_\pi \cap A_{\pi'})}{\Pb_{1/2}(A_\pi \cap A_{\pi'})}, ~~~~ \frac{\E_{m}[X_\bfk]}{ \E_{1/2}[X_\bfk]} =\exp(O(\log^2 n)).
\end{equation}
Using this, Proposition~\ref{prop:probabilities} and \eqref{eq:firstmomentcocol}, \eqref{eq:Zfirstmomentold},  \eqref{eq:Zfirstmoment}, we can directly transfer Lemmas~\ref{lemmascrambled} and~\ref{lemmamiddle} to our setting.
\begin{lemma}\label{lemmascrambledco}
Let $\kp=\kp(n)$ be a tame $(\alpha-1)$-bounded profile, then
	\begin{equation*}\frac{1}{\E_{1/2}[Z_\kp^\mathrm{co}]^2}\sum_{(\pi,\pi') \in \Pi_{\textup{scrambled}}} \Pb_{1/2}\left(A^\mathrm{co}_{\pi} \cap A^\mathrm{co}_{\pi'}\right) \le \exp \left(  	O(n / \mu_{\alpha}+\log^2 n)\right).
	\end{equation*}\qed
\end{lemma}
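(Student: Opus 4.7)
The plan is to reduce the cocolouring bound to the colouring bound of Lemma~\ref{lemmascrambled} by (i) paying a factor of $2^{2k}$ to pass from $A_{\pi}^\mathrm{co}\cap A_{\pi'}^\mathrm{co}$ to $A_\pi\cap A_{\pi'}$, (ii) paying $\exp(O(\log^2 n))$ to switch from $G_{n,1/2}$ to $G_{n,m}$, and then (iii) observing that these factors are almost perfectly absorbed by the corresponding factor of $2^{2k}$ between $\E_{1/2}[Z_\bfk^\mathrm{co}]^2$ and $\E_m[Z_\bfk]^2$.

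Concretely, I would first apply the second part of Proposition~\ref{prop:probabilities} pairwise (dropping the factor $2^{-\ell}$ by bounding it by $1$) to obtain
\[
\sum_{(\pi,\pi') \in \Pi_{\text{scrambled}}} \Pb_{1/2}\bigl(A_\pi^\mathrm{co} \cap A_{\pi'}^\mathrm{co}\bigr)
\;\le\; 2^{2k} \sum_{(\pi,\pi') \in \Pi_{\text{scrambled}}}  \Pb_{1/2}(A_\pi \cap A_{\pi'}).
\]
Then I would invoke \eqref{eq:gnhtransfer} to switch the measure to $G_{n,m}$, picking up a factor $\exp(O(\log^2 n))$, and apply Lemma~\ref{lemmascrambled} to bound the resulting sum by $\E_m[Z_\bfk]^2 \exp(O(n/\mu_\alpha)+o(1))$. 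Combining these gives
\[
\sum_{(\pi,\pi') \in \Pi_{\text{scrambled}}} \Pb_{1/2}\bigl(A_\pi^\mathrm{co} \cap A_{\pi'}^\mathrm{co}\bigr)
\;\le\; 2^{2k}\, \E_m[Z_\bfk]^2 \exp\!\bigl(O(n/\mu_\alpha+\log^2 n)\bigr).
\]

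For the denominator, I would chain \eqref{eq:Zfirstmoment}, \eqref{eq:firstmomentcocol}, \eqref{eq:Zfirstmomentold} and \eqref{eq:gnhtransfer}: by \eqref{eq:Zfirstmoment} and \eqref{eq:firstmomentcocol} we have $\E_{1/2}[Z_\bfk^\mathrm{co}] \sim 2^k \E_{1/2}[X_\bfk]$, and by \eqref{eq:Zfirstmomentold} and \eqref{eq:gnhtransfer} we have $\E_m[Z_\bfk] \sim \E_m[X_\bfk]=\E_{1/2}[X_\bfk] \exp(O(\log^2 n))$. Squaring and combining yields $2^{2k}\E_m[Z_\bfk]^2 = \E_{1/2}[Z_\bfk^\mathrm{co}]^2 \exp(O(\log^2 n))$, so dividing gives exactly the desired bound $\exp(O(n/\mu_\alpha+\log^2 n))$.

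There is no real obstacle here beyond careful bookkeeping; the reason the argument works is that all the $2^k$ factors (from choosing clique vs.\ independent-set assignments in $\pi$ and in $\pi'$) cancel between numerator and denominator, while the only price paid for the lack of an exact $\ell$-dependent cancellation in Proposition~\ref{prop:probabilities} is the factor $2^\ell$, which for scrambled pairs is at most $2^{\lambda n/u^*}=2^{O(n/\log^4 n)}=\exp(o(n/\mu_\alpha))$ under the hypothesis $\mu_\alpha \le n^{1-\eps}$ and is absorbed into the existing $\exp(O(n/\mu_\alpha))$ term. The slight worry I would want to double-check is just that the $\exp(O(\log^2 n))$ loss from the $G_{n,m}\leftrightarrow G_{n,1/2}$ transfer appears both on the numerator (from the pair probabilities) and the denominator (from the first moments), but these do not cancel and surface as the extra $O(\log^2 n)$ term in the final bound, as stated.
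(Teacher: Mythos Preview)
Your argument is correct and is exactly the route the paper takes: it cites Proposition~\ref{prop:probabilities} together with \eqref{eq:firstmomentcocol}, \eqref{eq:Zfirstmomentold}, \eqref{eq:Zfirstmoment} and \eqref{eq:gnhtransfer} to transfer Lemma~\ref{lemmascrambled} directly, with the $2^{2k}$ factors cancelling between numerator and denominator and the $G_{n,m}\leftrightarrow G_{n,1/2}$ transfer contributing the extra $\exp(O(\log^2 n))$.

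One small remark: your final aside about the discarded factor $2^{\ell}$ is unnecessary (you already bounded $2^{-\ell}\le 1$, which is all that is needed), and the claim there that $2^{O(n/\log^4 n)}=\exp(o(n/\mu_\alpha))$ under $\mu_\alpha\le n^{1-\eps}$ is not correct in general (e.g.\ if $\mu_\alpha\approx n^{1/2}$ then $n/\mu_\alpha\approx n^{1/2}\ll n/\log^4 n$); also, the lemma as stated carries no hypothesis on $\mu_\alpha$. Since your main chain of inequalities does not use this aside, the proof stands as written once that paragraph is dropped.
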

\begin{lemma}\label{lemmamiddleco}
	Under the conditions of Lemma~\ref{lemmamiddle},
	\[
	\frac{1}{\E_{1/2}[Z^\mathrm{co}_\kp]^2}\sum_{(\pi,\pi') \in \Pi_{\textup{middle}}} \Pb_{1/2}\left(A^\mathrm{co}_{\pi} \cap A^\mathrm{co}_{\pi'}\right) = \exp(O(\log^2 n)).
	\]\qed
\end{lemma}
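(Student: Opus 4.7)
The plan is to reduce the middle-range bound for cocolourings directly to the corresponding colouring statement in Lemma~\ref{lemmamiddle}, exactly as is done for Lemma~\ref{lemmascrambledco}. The key observation is that Proposition~\ref{prop:probabilities} gives, for any pair $(\pi, \pi')$ in $\Pi_\kp$ sharing $\ell \ge 0$ parts,
\[
\Pb_{1/2}(A^\mathrm{co}_\pi \cap A^\mathrm{co}_{\pi'}) \le 2^{2k-\ell}\, \Pb_{1/2}(A_\pi \cap A_{\pi'}) \le 2^{2k}\, \Pb_{1/2}(A_\pi \cap A_{\pi'}),
\]
while the first-moment identity \eqref{eq:firstmomentcocol} together with \eqref{eq:Zfirstmomentold} and \eqref{eq:Zfirstmoment} yields $\E_{1/2}[Z^\mathrm{co}_\bfk] \sim 2^k\,\E_{1/2}[Z_\bfk]$, so $\E_{1/2}[Z^\mathrm{co}_\bfk]^2 \sim 4^k\,\E_{1/2}[Z_\bfk]^2$. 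The factors of $4^k$ therefore cancel and
\[
\frac{1}{\E_{1/2}[Z^\mathrm{co}_\bfk]^2}\sum_{(\pi,\pi')\in\Pi_{\mathrm{middle}}} \Pb_{1/2}(A^\mathrm{co}_\pi \cap A^\mathrm{co}_{\pi'}) \le (1+o(1))\, \frac{1}{\E_{1/2}[Z_\bfk]^2}\sum_{(\pi,\pi')\in\Pi_{\mathrm{middle}}} \Pb_{1/2}(A_\pi \cap A_{\pi'}).
\]

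Next I would transfer the remaining sum from $G_{n,1/2}$ to $G_{n,m}$ using \eqref{eq:gnhtransfer}: both the joint probabilities $\Pb(A_\pi \cap A_{\pi'})$ and the first moments $\E[X_\bfk]$ (hence $\E[Z_\bfk]$ by \eqref{eq:Zfirstmomentold}) differ between the two models by a multiplicative factor of $\exp(O(\log^2 n))$. Consequently,
\[
\frac{1}{\E_{1/2}[Z_\bfk]^2}\sum_{(\pi,\pi')\in\Pi_{\mathrm{middle}}} \Pb_{1/2}(A_\pi \cap A_{\pi'}) = \exp\!\big(O(\log^2 n)\big)\cdot \frac{1}{\E_{m}[Z_\bfk]^2}\sum_{(\pi,\pi')\in\Pi_{\mathrm{middle}}} \Pb_{m}(A_\pi \cap A_{\pi'}).
\]
Since the hypotheses on $\bfk$ are precisely those of Lemma~\ref{lemmamiddle} and $\Pi_{\mathrm{middle}}$ as defined in \eqref{eq:defofscrambledetc} depends only on the underlying partitions (not on whether one is counting colourings or cocolourings), Lemma~\ref{lemmamiddle} applies directly and bounds the $G_{n,m}$-sum by $o(1)$. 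Combining, one obtains the claimed $\exp(O(\log^2 n))$ bound.

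I do not anticipate any genuine obstacle here: the crude estimate $\ell \ge 0$ in Proposition~\ref{prop:probabilities}(b) is perfectly adequate because the entire $4^k$ given up in the numerator is recovered in the denominator through $\E_{1/2}[Z^\mathrm{co}_\bfk] \sim 2^k\,\E_{1/2}[Z_\bfk]$. The proof is essentially a verbatim analogue of the tacit justification of Lemma~\ref{lemmascrambledco}, the only bookkeeping being the single transfer between $\Pb_{1/2}$ and $\Pb_m$ via Lemma~\ref{gnmlemma} packaged in \eqref{eq:gnhtransfer}, which accounts for the $\exp(O(\log^2 n))$ loss compared with the $o(1)$ bound available in the $G_{n,m}$ model.
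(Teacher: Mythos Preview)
Your proposal is correct and follows exactly the approach the paper indicates: it cites precisely Proposition~\ref{prop:probabilities}, \eqref{eq:firstmomentcocol}, \eqref{eq:Zfirstmomentold}, \eqref{eq:Zfirstmoment} and \eqref{eq:gnhtransfer} as the ingredients for transferring Lemma~\ref{lemmamiddle}, and your write-up fills in this sketch with the natural details. The paper itself gives no further proof beyond that one-line justification (hence the \qed), so your argument is the intended one.
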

With slightly more effort, we obtain the following result from Lemma~\ref{lemmasimilar}.
\begin{lemma}
	\label{lemmasimilarco}
Let $\kp=\kp(n)$ be a tame $(\alpha-1)$-bounded profile, 
	then
	\[
\frac{1}{\E_{1/2}[Z_\kp^\mathrm{co}]^2}\sum_{(\pi,\pi') \in \Pi_{\textup{similar}}} \Pb_{1/2}\left(A^{\mathrm{co}}_{\pi} \cap A^{\mathrm{co}}_{\pi'}\right) \le  \frac{\exp(O\big(n^{1-c_0}/ \log n)\big)}{\E_{1/2}[\bar X^{\mathrm{co}}_\bfk]} \enspace .
	\]
\end{lemma}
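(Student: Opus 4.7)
The plan is to mirror the transfer argument that produced Lemmas~\ref{lemmascrambledco} and~\ref{lemmamiddleco}, but to be more careful with the factor $2^{2k-\ell}$ from Proposition~\ref{prop:probabilities}, because in the similar regime $2k-\ell$ is essentially $k$ rather than nearly $2k$, and this extra factor will be paid for by the $2^k$ relating $\E_{1/2}[\bar X_\bfk^{\mathrm{co}}]$ to $\E_{1/2}[\bar X_\bfk]$.

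\textbf{Step 1: Counting identical parts in similar pairs.} The first key observation is that for $(\pi,\pi') \in \Pi_{\text{similar}}$, the number $\ell$ of parts shared by $\pi$ and $\pi'$ satisfies $\ell \ge k - O(n^{1-c_0}/\log n)$. Indeed, at least $(1-n^{-c_0})n$ vertices lie in identical parts, so at most $n^{1-c_0}$ lie in non-identical parts; since every part of a tame $(\alpha-1)$-bounded profile has size at least $u^* \sim 2\log_2 n$, the number of non-identical parts is $O(n^{1-c_0}/\log n)$. Applying the second bound of Proposition~\ref{prop:probabilities} uniformly in $\Pi_{\text{similar}}$ then gives
\[
\Pb_{1/2}(A^{\mathrm{co}}_\pi \cap A^{\mathrm{co}}_{\pi'}) \le 2^{k + O(n^{1-c_0}/\log n)} \, \Pb_{1/2}(A_\pi \cap A_{\pi'}).
\]

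\textbf{Step 2: Transfer to $G_{n,m}$ and invoke Lemma~\ref{lemmasimilar}.} Summing over $\Pi_{\text{similar}}$, and using~\eqref{eq:gnhtransfer} to trade $\Pb_{1/2}$ for $\Pb_m$ at the cost of a factor $\exp(O(\log^2 n))$, I apply Lemma~\ref{lemmasimilar} to get
\[
\sum_{(\pi,\pi') \in \Pi_{\text{similar}}} \Pb_{1/2}(A^{\mathrm{co}}_\pi \cap A^{\mathrm{co}}_{\pi'}) \le 2^{k + O(n^{1-c_0}/\log n)} \exp(O(\log^2 n)) \cdot \frac{n^{O(1)} \E_m[Z_\bfk]^2}{\E_m[\bar X_\bfk]}.
\]

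\textbf{Step 3: Translating first moments into the cocolouring world.} The remaining task is to rewrite the right-hand side in terms of $\E_{1/2}[Z_\bfk^{\mathrm{co}}]$ and $\E_{1/2}[\bar X^{\mathrm{co}}_\bfk]$. Combining~\eqref{eq:firstmomentcocol},~\eqref{eq:Zfirstmomentold},~\eqref{eq:Zfirstmoment} and~\eqref{eq:gnhtransfer}, I obtain
\[
\E_m[Z_\bfk]^2 \le 2^{-2k}\exp(O(\log^2 n))\, \E_{1/2}[Z^{\mathrm{co}}_\bfk]^2 \quad \text{and} \quad \E_m[\bar X_\bfk] \ge 2^{-k}\exp(-O(\log^2 n))\, \E_{1/2}[\bar X^{\mathrm{co}}_\bfk],
\]
so that
\[
\frac{\E_m[Z_\bfk]^2}{\E_m[\bar X_\bfk]} \le 2^{-k}\exp(O(\log^2 n)) \cdot \frac{\E_{1/2}[Z^{\mathrm{co}}_\bfk]^2}{\E_{1/2}[\bar X^{\mathrm{co}}_\bfk]}.
\]
Substituting back, the $2^{+k}$ from Step 1 cancels the $2^{-k}$ here, and all the $\exp(O(\log^2 n))$ and $n^{O(1)}$ factors are absorbed into $\exp(O(n^{1-c_0}/\log n))$ (since $c_0 < 1/3$). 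Dividing by $\E_{1/2}[Z^{\mathrm{co}}_\bfk]^2$ yields the claimed bound.

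The only nontrivial step is Step 1, specifically arguing that $\ell$ is forced to be close to $k$ for similar pairs; the rest is bookkeeping. If one were not careful and used the trivial bound $\Pb_{1/2}(A^{\mathrm{co}}_\pi \cap A^{\mathrm{co}}_{\pi'}) \le 2^{2k}\Pb_{1/2}(A_\pi \cap A_{\pi'})$, one would pick up an extra $2^k$ which is much too large to absorb, so the gain from shared parts is essential.
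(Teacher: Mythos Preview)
Your proof is correct and follows essentially the same route as the paper's: bound $k-\ell = O(n^{1-c_0}/\log n)$ from the similarity condition, use Proposition~\ref{prop:probabilities} to get a factor $2^{2k-\ell}=2^{k+O(n^{1-c_0}/\log n)}$, transfer to $G_{n,m}$ via~\eqref{eq:gnhtransfer}, invoke Lemma~\ref{lemmasimilar}, and then convert the first moments back using~\eqref{eq:firstmomentcocol},~\eqref{eq:Zfirstmomentold},~\eqref{eq:Zfirstmoment} so that the stray $2^k$ cancels. The only cosmetic difference is that the paper computes $k-\ell$ via the weighted identity $\sum_u (k_u-\ell_u)u = n(1-\lambda)$ whereas you count vertices in non-identical parts and divide by $u^*$; these are equivalent.
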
 
\begin{proof}
	For $(\pi, \pi') \in \Pi_\textup{similar}$, as $\lambda=\lambda(\pi, \pi') >1-n^{-c_0}$, we have
	\[
	k-\ell = \sum_{u=u^*}^{\alpha-1}(k_u-\ell_u) \sim \frac{n}{2 \log_2 n}\sum_{u=u^*}^{\alpha-1}\frac{(k_u-\ell_u)u}{n} =   \frac{n}{2 \log_2 n}(1-\lambda) = O(n^{1-c_0} / \log n).
	\]
		By Proposition~\ref{prop:probabilities}, \eqref{eq:firstmomentcocol}, \eqref{eq:Zfirstmomentold}, \eqref{eq:Zfirstmoment} and \eqref{eq:gnhtransfer}, we have 
	\begin{align*}
	\frac{1}{\E_{1/2}[Z_\kp^\mathrm{co}]^2}&\sum_{(\pi,\pi') \in \Pi_{\textup{similar}}} \Pb_{1/2}\left(A^{\mathrm{co}}_{\pi} \cap A^{\mathrm{co}}_{\pi'}\right) \le \frac{1+o(1)}{2^k\E_{1/2}[Z_\kp]^2} \sum_{(\pi,\pi') \in \Pi_{\textup{similar}}} 2^{k-\ell}\Pb_{1/2}\left(A_{\pi} \cap A_{\pi'} \right) \\
	&= \frac{1}{2^k\E_m[Z_\kp]^2}\sum_{(\pi,\pi') \in \Pi_{\textup{similar}}} \Pb_m\left(A_{\pi} \cap A_{\pi'}\right) \exp \Big(O\Big(\frac{n^{1-c_0}}{\log n}\Big)\Big). 	\end{align*}
		Using Lemma~\ref{lemmasimilar} and \eqref{eq:firstmomentcocol}, \eqref{eq:gnhtransfer} again gives the result.
\end{proof}

\subsection{The special profile $\bfk^*$}

In this section we combine some results from \cite{heckel2023colouring} and \cite{HRHowdoes} that imply that a suitable $k^*$-profile exists that we may apply Lemmas \ref{lemmascrambledco}--\ref{lemmasimilarco} to. 

We start with some notation. The following quantity was first introduced in \cite{HRHowdoes} 
as an approximation of the logarithm of the expected number of $t$-bounded $k$-colourings. Let $P_{n,k,t}^0$ denote the set of $t$-bounded profiles $(k_u)_{u=1}^t$, and set
\begin{equation*}
	L_0(n,k,t):=\sup_{\pi \in P^0_{n,k,t}}\left\{ n\log n -n +k -\sum_{u=1}^t k_u \log(k_u d_u) \right\}
\end{equation*}
where
\begin{equation*}
	d_u:= 2^{\binom{u}{2}}u!.
\end{equation*}
For $t=O(\log n)$ and $1< n/k <t$, the logarithm $\log E_{n,k,t}$ of the expected number of unordered $t$-bounded $k$-colourings of $G_{n, 1/2}$ is within $O(\log^4 n)$ of $L_0(n,k,t)$ (see \cite{HRHowdoes}, Lemma~29). In particular, by the definition \eqref{ktdef} of $ \boldk_{\alpha-1}$, it follows that
\begin{equation} \label{eq:L0atthreshold}
	L_0 (n, \boldk_{\alpha-1}-1, \alpha-1) \le \Theta (\log^ 4 n) \quad \text{and} \quad 	L_0 (n, \boldk_{\alpha-1}, \alpha-1) \ge -\Theta (\log^ 4 n). 
\end{equation}

The following result will help us estimate how this expression changes as we decrease $k$.
\begin{lemma}[\cite{HRHowdoes}, Cor.~39]\label{lemma:delk}
	Uniformly over all $k\le n/2$ such that $k=n/(\alpha-1-\Theta(1))$,	
	\[
	\frac{\partial}{\partial k} L_0 (n, k, \alpha-1) = \frac 2 {\log 2} \log^2 n +O(\log n \log \log n).
	\]
\end{lemma}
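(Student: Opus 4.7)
The plan is to interpret $L_0(n,k,\alpha-1)$ as the value of a constrained maximization, apply the envelope theorem to express $\partial L_0/\partial k$ as a Lagrange multiplier, and then estimate that multiplier by a saddle-point analysis. Writing $a = \alpha - 1$ and relaxing the profile $(k_u)_{u=1}^{a}$ to take continuous nonnegative values, I would maximize
\[
F(\bfk) \;=\; n \log n - n + k - \sum_{u=1}^{a} k_u \log(k_u d_u)
\]
subject to $\sum_u k_u = k$ and $\sum_u u k_u = n$ (the completeness constraint under which the formula for $L_0$ is derived). With Lagrange multipliers $\mu$ and $\nu$ for the two constraints, the first-order conditions give the optimizer $k_u^\star = e^{-1-\mu-\nu u}/d_u$. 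Since $k$ enters the objective linearly and only through the first constraint, the envelope theorem yields the key identity $\partial L_0/\partial k = 1 + \mu$.

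The problem thus reduces to asymptotics of $\mu$. Setting $q_u := e^{-\nu u}/d_u$, the two constraints read $\sum_u q_u = k\,e^{1+\mu}$ and $\sum_u u\,q_u/\sum_u q_u = n/k$, so $\nu$ is pinned down by the mean condition and
\[
1+\mu \;=\; \log\!\Bigl(\sum_u q_u\Bigr) - \log k.
\]
Hence it is enough to estimate $\nu$ and $\sum_u q_u$ up to additive $O(\log n \log \log n)$. Using Stirling on $\log u!$ in $\log q_u = -\nu u - \binom{u}{2}\log 2 - \log u!$ gives $(\log q_u)'' = -\log 2 + O(1/u)$, so $\log q_u$ is strictly concave with a unique maximizer $u_0$ satisfying $\nu = -(u_0 - \tfrac12)\log 2 - \log u_0 + O(1/u_0)$. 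The measure $q_u/\sum_v q_v$ is approximately Gaussian of $O(1)$ width around $u_0$, so its mean equals $u_0 + O(1)$, forcing $u_0 = n/k + O(1) = a - \Theta(1)$. A standard Laplace estimate then gives $\log \sum_u q_u = \log q_{u_0} + O(1)$. Substituting $\nu$ and $\log d_{u_0} = \binom{u_0}{2}\log 2 + \log u_0!$ and simplifying, one obtains $1 + \mu = \tfrac{u_0^2}{2}\log 2 - \log k + O(u_0)$; since $u_0 = 2\log_2 n - 2\log_2\log_2 n + O(1)$ and $\log k = \log n + O(\log\log n)$, a routine expansion produces the claimed asymptotic.

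The main obstacle is sharpness: the leading term $\tfrac{u_0^2}{2}\log 2 = \tfrac{2 \log^2 n}{\log 2} + O(\log n \log\log n)$ depends on $u_0$ through its first two asymptotic digits, so one must track the cross-term in $u_0^2$ carefully and check that the Stirling remainder, the Gaussian normalization $\sqrt{2\pi/\log 2}$, and the uniformity over the range $k = n/(a - \Theta(1))$ all fit within $O(\log n \log\log n)$. A secondary point to justify is that the constraint $\sum u k_u = n$ is indeed active at the optimizer in this regime (otherwise the profile would drift to small $u$, contradicting the derived mean $u_0 \approx n/k \approx a$), and that the continuous relaxation faithfully approximates the discrete supremum to the same precision.
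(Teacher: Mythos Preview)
The paper does not supply its own proof of this lemma; it is quoted as Corollary~39 of \cite{HRHowdoes}. So there is no in-paper argument to compare against, and your proposal is a reconstruction of the cited result rather than an alternative to anything in the present paper.

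Your approach is sound and the calculation checks out. Once the constrained maximization defining $L_0$ is set up with multipliers $(\mu,\nu)$ for the constraints $\sum_u k_u=k$ and $\sum_u u k_u=n$, the envelope identity $\partial L_0/\partial k = 1+\mu$ reduces the problem to locating the optimizer; your Laplace analysis gives $u_0=n/k+O(1)=\alpha-\Theta(1)$ and $\log q_{u_0} = \tfrac{u_0^2}{2}\log 2 + O(u_0)$, and substituting $u_0=2\log_2 n+O(\log\log n)$ yields $1+\mu = \tfrac{2}{\log 2}\log^2 n + O(\log n\log\log n)$ as claimed. This Lagrangian/saddle-point route is the natural way to compute such derivatives and is in the same spirit as the analysis in \cite{HRHowdoes}, where the optimal profile and the associated multipliers are determined explicitly before the derivative is read off.

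The two caveats you flag are real but minor. The completeness constraint $\sum_u u k_u = n$ is indeed active: the displayed formula for $L_0$ arises from Stirling applied to $n!/(\prod_u u!^{k_u}\prod_u k_u!)$, i.e.\ to complete profiles, so the supremum should be read over those. And the upper truncation at $u=\alpha-1$ sits only $\Theta(1)$ above the mode $u_0$, with the $q$-measure having $O(1)$ Gaussian width, so the resulting $O(1)$ correction to $\log\sum_u q_u$ (and to its mean) is absorbed by the stated error term; this is the one place where uniformity over the range $k=n/(\alpha-1-\Theta(1))$ needs an explicit sentence in a full write-up.
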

The following is (a simplified version of) a lemma from \cite{heckel2023colouring}.

\begin{lemma}[\cite{heckel2023colouring}, Le.~7.20]\label{lemma:kstartame}
Fix $\epsilon>0$ and let $k^*= \boldk_{\alpha-1} - n^{1-\epsilon/2}$. Then there is an $(\alpha-1)$-bounded $k$-colouring profile $\bfk^*=\bfk^*(n)$ so that all of the following hold.
	\begin{itemize}
		\itemsep1pt
		\item[a)] 	$\ln \E_{1/2}[\bar X_{\bfk^*}] 
		= L_0(n,k^*,\alpha-1) +O(\log^{3/2} n)$.
		\item[b)] Part \ref{tame:tail}) of Definition \ref{deftame} of a tame profile holds for $\bfk^*$.
		\item[c)] If $\mu_{\alpha-1} \ge n^{1.05}$, then \eqref{eq:lowerboundbeta} holds.
			\end{itemize}
\end{lemma}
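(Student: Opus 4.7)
\medskip
\noindent\textbf{Proof plan.} The plan is to take $\bfk^*$ to be (an integer rounding of) the maximizer of the concave functional defining $L_0(n,k^*,\alpha-1)$, and then verify the three properties in turn. Specifically, I would first relax the profile to real values $(\tilde k_u)_{u=1}^{\alpha-1}$ and maximize $n\log n - n + k^* - \sum_u k_u \log(k_u d_u)$ subject to $\sum_u k_u = k^*$ and $\sum_u u k_u \le n$. The KKT conditions give an optimizer of the explicit form
\[
\tilde k_u^* = \frac{e^{-\lambda - \mu u}}{d_u}, \qquad u^* \le u \le \alpha - 1,
\]
where $\lambda, \mu \ge 0$ are Lagrange multipliers determined by the constraints. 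Rounding $\tilde k^*_u$ to an integer and making small corrections to respect the constraints produces $\bfk^*$; since $k^*$ differs from the threshold $\boldk_{\alpha-1}$ only by a lower-order $n^{1-\varepsilon/2}$, the multipliers $\lambda, \mu$ exist and vary smoothly with $n$.

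For part~a), I would start from the exact formula \eqref{eq:expectation} for $\E_{1/2}[\bar X_{\bfk^*}]$ and apply Stirling's approximation $\ln(k!) = k \ln k - k + O(\log k)$ to each factor $k_u^*!$, and similarly to $(n - \sum_u u k_u^*)!$ and the $u!^{k_u^*}$ factors. The leading order terms reproduce the functional in the definition of $L_0$, and since $\bfk^*$ was chosen to (essentially) realise the supremum, the leading-order part matches $L_0(n, k^*, \alpha-1)$. The error accumulated from $O(\log k_u^*)$ Stirling corrections, summed over the $O(\log n)$ values of $u$ with $k_u^* \neq 0$, and the error from rounding $\tilde \bfk^*$ to $\bfk^*$, can both be controlled to $O(\log^{3/2} n)$ provided we use the smoothness of $L_0$ near its optimizer. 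For part~b), the explicit form $\tilde k_u^* \propto e^{-\mu u}/d_u$ combined with $d_u = 2^{\binom{u}{2}} u!$ yields the ratio $\tilde k_{u+1}^*/\tilde k_u^* = e^{-\mu} (u+1)^{-1} 2^{-u}$, so $\tilde k_u^*$ decays super-exponentially as $u$ moves away from $\alpha-1$. Multiplying by $u/n$ and comparing with $2^{-(\alpha-u)\gamma(\alpha-u)}$ gives the tail bound with some increasing $\gamma$ tending to infinity.

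The main obstacle is part~c), which asks for the quantitatively strong uniform lower bound on $\E_{1/2}[\bar X_{\boldsymbol{\ell}}]$ for \emph{every} sub-profile $\boldsymbol{\ell} \le \bfk^*$ whose covered fraction $\sum_u \lambda_u$ is separated from $0$ and $1$. I would compute $\E_{1/2}[\bar X_{\boldsymbol{\ell}}]$ and $\prod_u \binom{k_u^*}{\ell_u}^2$ using \eqref{eq:expectation} and Stirling, and express the log-ratio in the form
\[
\ln \E_{1/2}[\bar X_{\boldsymbol{\ell}}] - 2 \sum_u \ln \binom{k_u^*}{\ell_u} = (\text{entropy-like term in } \boldsymbol{\ell}) + (\text{gain from the large value of } \mu_\alpha),
\]
where the Lagrangian structure from Step~1 lets us identify the entropy contribution and relate the gain term to $\log \mu_{\alpha-1}$. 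Under the hypothesis $\mu_{\alpha-1} \ge n^{1.05}$, this gain is at least a constant multiple of $(1-\lambda) n^{1 - o(1)}$ in the intermediate regime $\delta \le \sum_u \lambda_u \le 1 - \delta$, comfortably exceeding $\ln^6 n$. The delicate point is making this uniform across the entire admissible range of $\boldsymbol{\ell}$; one needs to combine convexity of the entropy functional (so the lower bound is controlled by its behaviour at the boundary of the $\boldsymbol{\ell}$-region) with a careful comparison of the binomial factors to the corresponding factors in $\E[\bar X_{\bfk^*}]$, using Lemma~\ref{lemma:delk}-type derivative estimates for $L_0$ to handle the dependence on $\sum_u u \ell_u$.
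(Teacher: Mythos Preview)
The paper does not prove this lemma at all: it is quoted verbatim (in simplified form) from \cite{heckel2023colouring}, Lemma~7.20, and used as a black box. So there is no ``paper's own proof'' to compare your plan against; what you have written is an attempt to reconstruct the argument of the cited source.

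As a reconstruction, your high-level strategy --- take the Lagrangian optimizer $\tilde k_u^* \propto e^{-\mu u}/d_u$, round it, and verify (a)--(c) --- is the standard route in this literature and is what \cite{heckel2023colouring} does. A couple of points in your sketch are imprecise or off, though. First, in your treatment of~(b) you compute $\tilde k_{u+1}^*/\tilde k_u^* = e^{-\mu}(u+1)^{-1}2^{-u}$ and conclude decay ``as $u$ moves away from $\alpha-1$'', but you have not pinned down the sign or size of $\mu$; in fact $e^{-\mu}$ must be of order $\alpha\,2^{\alpha}\asymp n^2\log n$ (so that the peak of $u\mapsto \tilde k_u^*$ sits at or beyond $\alpha-1$), and only then does the ratio cross~$1$ in the right place and give the required super-geometric decay for smaller $u$. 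Second, your error accounting for~(a) is too coarse: summing $O(\log k_u^*)$ over $O(\log n)$ values of $u$ gives $O(\log^2 n)$ naively, not $O(\log^{3/2}n)$; getting the stated bound requires exploiting that only $O(1)$ of the $k_u^*$ are polynomially large while the rest are double-exponentially small, together with a sharper rounding argument. Third, your plan for~(c) is the weakest part: the inequality~\eqref{eq:lowerboundbeta} must hold uniformly over all sub-profiles $\boldsymbol\ell\le\bfk^*$ with $\delta\le\sum_u\lambda_u\le 1-\delta$, and your sketch (``entropy-like term'' plus ``gain from $\mu_\alpha$'') does not yet isolate the mechanism. In \cite{heckel2023colouring} this step uses the specific structure of the optimal profile together with a careful comparison of the partial first moment $\E_{1/2}[\bar X_{\boldsymbol\ell}]$ to $\prod_u\binom{k_u^*}{\ell_u}^2$, and the hypothesis $\mu_{\alpha-1}\ge n^{1.05}$ enters precisely to guarantee a polynomial-in-$n$ margin; your appeal to convexity alone will not produce the uniform $\exp(\ln^6 n)$ lower bound without that quantitative input.
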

We also need the following lemma from \cite{heckel2023colouring} estimating $\boldk_{\alpha-1}$.
\begin{lemma}[\cite{heckel2023colouring}, Le.~7.4] \label{lemma:boldkestimate}
\[	\frac{n}{\boldk_{\alpha-1}} = \alpha_0-1-\frac 2 {\log 2} +o(1).
\]
\end{lemma}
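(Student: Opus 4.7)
\smallskip\noindent
\textbf{Proof plan.}  The plan is to combine \eqref{eq:L0atthreshold} and Lemma~\ref{lemma:delk} to reduce the problem to evaluating $L_0(n,\cdot,\alpha-1)$ at a single well-chosen value of $k$, and then to carry out that evaluation via a Lagrangian optimisation over colouring profiles.

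First, by \eqref{eq:L0atthreshold} we have $|L_0(n,\boldk_{\alpha-1},\alpha-1)| = O(\log^4 n)$, while by Lemma~\ref{lemma:delk} the slope $\partial L_0/\partial k$ is $(2/\log 2)\log^2 n + o(\log^2 n)$.  Hence any $k^\star = \Theta(n/\log n)$ for which we can show $L_0(n,k^\star,\alpha-1) = o(n)$ must satisfy $|k^\star - \boldk_{\alpha-1}| = o(n/\log^2 n)$.  Since $\boldk_{\alpha-1}\sim n/(2\log_2 n)$, such an additive error in $k$ translates to an $o(1)$ error in $n/k$,
\[
\frac{n}{\boldk_{\alpha-1}} - \frac{n}{k^\star} = \frac{n(k^\star - \boldk_{\alpha-1})}{\boldk_{\alpha-1}\,k^\star} = o(1).
\]
So it suffices to exhibit some $k^\star$ with $n/k^\star = \alpha_0 - 1 - 2/\log 2 + o(1)$ and $L_0(n,k^\star,\alpha-1) = o(n)$.

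To find such a $k^\star$, I would put $L_0$ in a more tractable entropic form.  Substituting the approximation $d_u = (n^u/\mu_u)(1+O(u^2/n))$, which is valid for $u = O(\log n)$, the definition of $L_0$ yields
\[
L_0(n,k,\alpha-1) = \sup_{(k_u)} \Bigl[\, -n + k + \sum_{u=1}^{\alpha-1} k_u \log(\mu_u/k_u) \,\Bigr] + O(\log n),
\]
the sup running over complete $(\alpha-1)$-bounded $k$-profiles.  Standard Lagrange multipliers show the continuous optimiser takes the form $k_u \propto \mu_u e^{-\beta u}$, with the tilt parameter $\beta = \beta(k)$ determined by the constraints $\sum k_u = k$ and $\sum u k_u = n$.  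Plugging this back gives the closed form
\[
L_0(n,k,\alpha-1) = -n + k + k\log Z(\beta) - k\log k + \beta n + O(\log n),
\]
where $Z(\beta):=\sum_{u=1}^{\alpha-1}\mu_u e^{-\beta u}$, together with the saddle relation $n/k = -(\log Z)'(\beta)$.

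The final step is to set $L_0 = o(n)$ and solve asymptotically for $(k^\star,\beta)$.  Because $\mu_{u+1}/\mu_u \approx n/((u+1)2^u)$ is tiny for $u \gtrsim \log_2 n$, for sufficiently negative $\beta$ the sum $Z(\beta)$ is dominated by a few terms near $u = \alpha-1$; the relevant $\beta$ is essentially fixed by the requirement $e^\beta \approx \mu_\alpha/\mu_{\alpha-1}$, which places the tilted mean $n/k$ close to but slightly below $\alpha - 1$.  Carrying the expansion to the $O(1)$ term, using the explicit form of $\alpha_0$ from \eqref{eq:defalpha0}, produces the precise constant $2/\log 2$ in $n/k^\star = \alpha_0 - 1 - 2/\log 2 + o(1)$.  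The main technical obstacle is exactly this last expansion: controlling the contributions of both $u=\alpha-1$ and its neighbours in $Z(\beta)$, and handling the rounding from the continuous Lagrangian profile to an integer-valued profile.  These computations are precisely those carried out in Lemma~7.4 of \cite{heckel2023colouring}, to which this paper defers.
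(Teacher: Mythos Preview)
The paper does not prove this lemma; it is simply quoted from \cite{heckel2023colouring} (Lemma~7.4) without argument.  So there is no ``paper's own proof'' to compare your sketch against, and your closing sentence---deferring to the cited source---is in fact the whole content of what the present paper does here.

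Your outline is a reasonable reconstruction of how such a statement is proved, and the entropic rewriting of $L_0$ via $d_u = (n^u/\mu_u)(1+O(u^2/n))$ is correct.  One point to watch, however, is a mild circularity in invoking Lemma~\ref{lemma:delk}.  That lemma's hypothesis is not merely $k=\Theta(n/\log n)$ but the sharper $k = n/(\alpha-1-\Theta(1))$, i.e.\ $n/k$ within $O(1)$ of $\alpha-1$.  For your chosen $k^\star$ this holds by construction, but you also need it for $\boldk_{\alpha-1}$ (and for every $k$ between them) in order to integrate the derivative bound.  Since establishing $n/\boldk_{\alpha-1}=\alpha-1-\Theta(1)$ is essentially a weak form of the very conclusion you are after, a clean argument needs a bootstrap step: first pin down $\boldk_{\alpha-1}$ crudely enough to place it in the range where Lemma~\ref{lemma:delk} applies (for instance via monotonicity of $L_0$ in $k$ together with the sign of $L_0$ at the edges of the range), and only then run the derivative comparison.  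This is not a fatal gap, but as written your first paragraph glosses over it.
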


\subsection{Completing the proof}\label{section:complete}
Now we have all ingredients in place. Let $\epsilon>0$ and set $k^*= \boldk_{\alpha-1} - n^{1-\epsilon/2} \sim \frac{n}{2 \log_2 n}$. By Lemma~\ref{lemma:boldkestimate} we have $k^* = \frac{n}{\alpha-1-\Theta(1)}$, so by \eqref{eq:L0atthreshold} and Lemma~\ref{lemma:delk},
\[L_0(n,k^*,\alpha-1) = -n^{1-\epsilon/2}\frac 2 {\log 2} \log^2 n +O(n^{1-\epsilon/2}\log n \log \log n). \]
Let $\bfk^*$ be the profile from Lemma~\ref{lemma:kstartame}, then in particular
\[
\ln \E_{1/2}[\bar X_{\bfk^*}] = -\Theta\big(n^{1-\epsilon/2} \log^2 n \big). 
\]
Together with \eqref{eq:gnhtransfer}, \eqref{eq:gnhtransfer} and part b) of Lemma~\ref{lemma:kstartame}, this implies that both conditions of Definition~\ref{deftame} are met with $c=\epsilon/4$, say. So the profile $\bfk^*$ is tame. We assume $\mu_\alpha \ge n^{0.05+\epsilon}$, and by \eqref{eq:mualpha} that implies $\mu_{\alpha-1} \ge n^{1.05}$ if $n$ is large enough. So by part c) of Lemma~\ref{lemma:kstartame}, we also have \eqref{eq:lowerboundbeta}.

The point is of course that the expected number of \emph{cocolourings} with profile $\bfk^*$ is much larger: as $k^* \sim \frac{n}{2 \log_2 n}$, by \eqref{eq:firstmomentcocol},
\[
\E_{1/2}[\bar X^\mathrm{co}_{\bfk^*}] = 2^k \E_{1/2}[\bar X_{\bfk^*}] = \exp\big(\Theta(n/\log n)\big).
\]
Now set $Z= Z_{\bfk^*}^\mathrm{co}$. By the above, and since we assumed $\mu_\alpha \ge n^{0.05+\epsilon}$, we may apply the three second moment lemmas, Lemmas~\ref{lemmascrambledco}--\ref{lemmasimilarco} (with $c_0=c/3 = \epsilon/12$) as well as \eqref{eq:secondmomentpirel} to bound
\[
\frac{\E_{1/2}[Z^2]}{\E_{1/2}[Z]^2} \le \frac{1}{\E_{1/2}[Z]^2} \sum_{(\pi, \pi') \in \Pi_{\textup{rel}}(\bfk^*)} \Pb_{1/2} (A^\mathrm{co}_{\pi} \cap A^\mathrm{co}_{\pi'}) \le \exp(O(n^{0.95})).
\]
This gives part b) of Proposition~\ref{proposition:main}. Of course by its definition \eqref{eq:defZ}, $Z=Z_{\bfk^*}^\mathrm{co}>0$ implies $X_{\bfk^*}^\mathrm{co}>0$, so we also have part a). This completes the proof of Proposition~\ref{proposition:main} and thereby Theorem~\ref{theorem:difference}.

\section{Discussion}\label{section:discussion}
The reason why we only prove Theorem~\ref{theorem:difference} for $\mu_\alpha \ge n^{0.05+\epsilon}$ is that 
$\mu_{\alpha-1} = \mu_\alpha n^{1-o(1)}\ge n^{1.05}$ is a condition in part c) of Lemma~\ref{lemma:kstartame} (which is Lemma~7.20 in \cite{heckel2023colouring}) which gives us the profile~$\bfk^*$. It should be straightforward to change the proof of Lemma~7.20 in   \cite{heckel2023colouring} to only require $\mu_\alpha \ge n^{x_0+\epsilon}$ for any $\epsilon>0$ and a certain constant $x_0 \approx 0.02905$. This would in turn yield Theorem~\ref{theorem:difference} for roughly 97\% of all values $n$. However, for $\mu_\alpha <n^{x_0}$, the behaviour of the optimal profile $\bfk^*$ changes --- see the discussion after Lemma~7.20 in \cite{heckel2023colouring}. Thus, while we still believe the conclusion of Theorem~\ref{theorem:difference} to be true in this case, we would need to analyse the optimal profile more carefully for such $n$ in order to obtain a similar result to Lemma~\ref{lemma:kstartame}. 

Could we improve the conclusion of Theorem~\ref{theorem:difference} from the lower bound $n^{1-\epsilon}$? We had quite a bit of room at the end of our proof in \S\ref{section:complete}, applying the second moment method to a $k^*$ where the expected number of $k^*$-cocolourings is still of order $\exp(\Theta(n/\log n))$. Indeed, the `first moment threshold' for the number of cocolourings should be  order $n / \log^3 n$ smaller than that of the normal chromatic number: the expected number of cocolourings is larger by a factor $2^k=\exp(\Theta(n/\log n))$ (see \eqref{eq:firstmomentcocol}), and decreasing the number of colours by $1$ multiplies the expected number of cocolourings by a factor $\exp(-\Theta(\log^2 n))$ (similarly to Lemma~\ref{lemma:delk}). We therefore have the following conjecture (which was first mentioned in \cite{heckel2024question}).
\begin{conjecture}
For $G \sim \Gnh$, whp, 
\[\chi(G)-\zeta(G)= \Theta(n/\log^3 n).\]
\end{conjecture}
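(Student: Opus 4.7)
The plan is to split the conjecture into two parts and address each one. For the upper bound $\chi(G)-\zeta(G)=O(n/\log^3 n)$, I will compare $\chi$ and $\zeta$ to the same reference quantity $\boldk_{\alpha-1}$, and then apply the same machinery of this paper to estimate both sides. For the lower bound $\chi(G)-\zeta(G)=\Omega(n/\log^3 n)$, I will strengthen the two-sided estimates on $\chi$ and $\zeta$ developed here, with the chromatic lower bound being the crucial bottleneck.

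For the \emph{upper bound direction}, first I would define the cocolouring analogue of $\boldk_{\alpha-1}$, namely $\boldk^{\mathrm{co}}_{\alpha-1}=\min\{k: 2^k E_{n,k,\alpha-1}\ge 1\}$, and show that $\boldk_{\alpha-1}-\boldk^{\mathrm{co}}_{\alpha-1}=\Theta(n/\log^3 n)$. This is a direct computation using Lemma~\ref{lemma:delk}: since $\partial L_0/\partial k=\Theta(\log^2 n)$ and the factor $2^k$ contributes $k\log 2=\Theta(n/\log n)$ to $\ln\E_{1/2}[\bar X^{\mathrm{co}}_\bfk]$, shifting the threshold requires a reduction of $\Theta(n/\log^{3} n)$ in $k$. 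The first moment method then yields $\zeta(G)\ge \boldk^{\mathrm{co}}_{\alpha-1}-o(n/\log^3 n)$ whp: summing $\E_{1/2}[\bar X^{\mathrm{co}}_\bfk]=2^k\E_{1/2}[\bar X_\bfk]$ over all $(\alpha-1)$-bounded profiles $\bfk$ for $k$ below the threshold by $\omega(\log^2 n)$ is $o(1)$, so no such cocolouring exists. For $\chi$, I would use the existing upper bound $\chi(G)\le \boldk_{\alpha-1}+o(n/\log^3 n)$ whp, which is a byproduct of the second-moment technology in \cite{heckel2023colouring} applied just above the threshold together with Azuma-Hoeffding concentration as in \S\ref{section:proof}. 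Combining gives $\chi(G)-\zeta(G)\le \boldk_{\alpha-1}-\boldk^{\mathrm{co}}_{\alpha-1}+o(n/\log^3 n)=O(n/\log^3 n)$ whp.

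For the \emph{lower bound direction}, the upper bound on $\zeta$ is essentially within reach of the current paper: I would rerun the proof of Proposition~\ref{proposition:main} with a more aggressive choice $k^*=\boldk_{\alpha-1}-cn/\log^3 n$ for a suitably small constant $c>0$, yielding $\zeta(G)\le \boldk_{\alpha-1}-cn/\log^3 n+o(n/\log^3 n)$ whp. The first moment remains large (indeed $\E_{1/2}[\bar X^{\mathrm{co}}_{\bfk^*}]=\exp(\Theta(n/\log n))$), and the second-moment work of \S\ref{section:propproof} is actually easier at this milder value of $k^*$; the Lemma~\ref{lemma:kstartame} construction of an optimal tame profile would need to be extended to this regime (likely along the same lines as the $\epsilon\to0$ discussion in \S\ref{section:discussion}). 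The corresponding lower bound on $\chi$ is the hard part: we need $\chi(G)\ge \boldk_{\alpha-1}-O(n/\log^3 n)$ whp. The current best bound from the remark after Lemma~\ref{lemma:lowerbound}, $\chi(G)\ge \boldk_{\alpha-1}-2\mu_\alpha/\alpha$ whp, is only strong enough when $\mu_\alpha=O(n/\log^2 n)$; for larger $\mu_\alpha$ (in particular the 95\% of $n$ where Theorem~\ref{theorem:difference} applies), one would need essentially optimal control over the count $X_\alpha$ of maximum independent sets and its relation to proper colourings.

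The \emph{main obstacle} is therefore the chromatic lower bound. Proving $\chi(G)\ge \boldk_{\alpha-1}-O(n/\log^3 n)$ whp for all $n$ is roughly equivalent to showing that $\chi(\Gnh)$ is concentrated within $O(n/\log^3 n)$ of a canonical first-moment target, which would, in turn, resolve the long-standing chromatic-concentration problem at its conjectured optimal width (and is known to fail for narrower windows by the nonconcentration results in \cite{heckel2019nonconcentration,HRHowdoes}). Until such sharp lower bounds for $\chi$ are available, the lower bound direction of the conjecture will only be accessible for those $n$ where $\mu_\alpha$ is relatively small; the upper bound direction, in contrast, appears to be fully within reach of the techniques already assembled in this paper.
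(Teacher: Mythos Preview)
This statement is a \emph{conjecture}; the paper does not prove it, but it does identify what it regards as the main obstruction. Your plan correctly separates the two directions, and your upper-bound sketch (first moment for $\zeta$, known upper bound for $\chi$) is reasonable. However, your diagnosis of the lower-bound direction is essentially reversed from the paper's.

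You claim that the $\zeta$ upper bound at $k^*=\boldk_{\alpha-1}-cn/\log^3 n$ is ``easier at this milder value of $k^*$'', and that the chromatic lower bound is the bottleneck. Both claims are off. First, $cn/\log^3 n \gg n^{1-\epsilon/2}$ for any fixed $\epsilon>0$, so this $k^*$ is \emph{more} aggressive than the one used in Proposition~\ref{proposition:main}, not milder. At this $k^*$ one computes via Lemma~\ref{lemma:delk} that $\ln \E_m[\bar X_{\bfk^*}] = -\Theta(n/\log n)$, which violates condition~b) of Definition~\ref{deftame}: the profile is no longer tame, so Lemmas~\ref{lemmascrambled}--\ref{lemmasimilar} (and hence their co-versions \ref{lemmascrambledco}--\ref{lemmasimilarco}) do not apply. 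This is exactly the obstacle the paper singles out in \S\ref{section:discussion}: removing or working around the tameness requirement $\ln \E_m[\bar X_\bfk]\gg -n^{1-c}$ would require either reworking the second-moment machinery of \cite{heckel2023colouring} or redoing it directly for cocolourings. The large \emph{co}colouring first moment does not help here, because the structural second-moment lemmas are stated and proved for ordinary colourings and only then transferred via Proposition~\ref{prop:probabilities}.

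Second, your claim that the chromatic lower bound fails ``in particular the 95\% of $n$ where Theorem~\ref{theorem:difference} applies'' is incorrect. For those $n$ we have $\mu_\alpha \le n^{1-\epsilon}$, hence $2\mu_\alpha/\alpha = O(n^{1-\epsilon}/\log n) = o(n/\log^3 n)$, and the bound $\chi(G)\ge \boldk_{\alpha-1}-2\mu_\alpha/\alpha$ from the remark after Corollary~5 is already ample. The chromatic lower bound is only potentially problematic on the complementary sliver where $\mu_\alpha$ is close to $n$; for the bulk of $n$ it is not the issue at all. So while you have correctly identified that the conjecture is open, you have located the genuine gap on the wrong side.
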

The reason why we cannot prove the conjecture with the methods from this paper (not even for $n$ such that $n^{1.05+\epsilon} \le \mu_\alpha < n^{1-\epsilon}$) is that the Definition~\ref{deftame} of a tame profile $\bfk$ from \cite{heckel2023colouring} stipulates that $\E_m[X_\bfk] \ge \exp(-n^{1-c})$ for some constant $c>0$, and this would no longer be true for a $k^*$-profile if we let $k^*= \boldsymbol{k}_{\alpha-1}-\Theta(n/\log^3 n)$. A lot more work would be required to get around this issue --- either eliminating this constraint in \cite{heckel2023colouring}, or repeating parts of the analysis in \cite{heckel2023colouring} for cocolourings directly.


\end{document}